\newtheorem{thm}{Theorem} 
\newtheorem{prop}{Proposition} 
\newtheorem{lem}{Lemma}
\newtheorem{exa}{Example} 
\newtheorem{rem}{Remark}
\newtheorem{cor}{Corollary}
\newtheorem{clm}{Claim}
\newcommand\Z{\mathbb Z}
\newcommand\Q{\mathbb Q}
\newcommand\R{\mathbb R}
\newcommand\C{\mathbb C}
\def\deg{\mbox{deg }}
\newcommand\iso{\cong}
\def\iso{\cong}
\def\>{\rangle}
\def\<{\langle}
\newcommand\D{\Delta}                
\def\a{\alpha}
\newcommand\Res{\mbox{Res}}
\title{Some remarks on the non-real roots of polynomials}
\author{Shuichi Otake}
\address{Department of Applied Mathematics\\ Waseda University \\ Japan }
\email{shuichi.otake.8655@gmail.com}
\author{Tony Shaska}
\address{Department of Mathematics and Statistics \\ Oakland  University \\ Rochester, MI, 48309. }
\email{shaska@oakland.edu}
\date{}                                           
\def\Gal{\mbox{Gal }}
\begin{document}

\begin{abstract}
Let $f \in \R( t) [x]$ be given by
$ f(t, x) = x^n + t \cdot g(x) $ 
and $\beta_1 <  \dots < \beta_m$ the distinct real roots of the discriminant 
$\D_{(f, x)} (t)$ of $f(t, x)$ with respect to $x$. Let $\gamma$ be the number of real roots of $g(x)=\sum_{k=0}^s t_{s-k} x^{s-k}$.
 For any $\xi > | \beta_m |$, if $n-s$ is odd then the number of real roots of $f(\xi, x)$ is $\gamma+1$,  and if  $n-s$ is even then the number of real roots of $f(\xi, x)$ is
  $\gamma$, $\gamma+2$ if $t_s>0$ or $t_s < 0$ respectively. A special case of the above result is constructing a family of totally complex polynomials which are reducible over $\Q$. 
\end{abstract}

\maketitle

\section{Introduction}\label{intro}

Let $f(x)\in \Q[x]$ be an irreducible polynomial of degree $n\geq 2$ and $\Gal (f)$ its Galois group over $\Q$. Let us assume that over $\R$, $f(x)$ is factored  as
\[ f(x) = a \, \prod_{j=1}^r   (x-\alpha_j) \, \prod_{j=1}^s (x^2 +a_i x+b_i).
\]
The pair $(r, s)$ is called the \textit{signature} of $f(x)$.  Obviously $\deg f = 2s+r$.   If $s=0$ then $f(x)$ is called \textit{totally real} and if $r=0$ it is called \textit{totally complex}.  By a reordering of the
roots we may assume that if $f(x)$ has $r$ non-real roots then
\[\a:=(1, 2) (3, 4)\cdots (r-1, r) \in Gal (f).\]
In \cite{b-sh} it is proved that if $\deg f =p$, for a prime $p$, and  $s$ satisfies
\[ s \, ( s \log s + 2 \log s + 3) \leq p \]
then $Gal (f)= A_p, S_p$. Moreover, a list of all possible groups for various values of $r$ is given for $p\leq 29$; see \cite[Thm.~2]{b-sh}.   There are some   follow up papers to \cite{b-sh}.  

In \cite{shimol} the author proves that if $ p \geq 4s+1$, then the Galois group is either $S_p$ or $A_p$. This improves the bound given in \cite{b-sh}. 
The author also studies when   polynomials with nonreal roots are solvable by radicals, which are consequences of  Table~2 and Theorem~2 in \cite{b-sh}. 
In \cite{otake} the author uses  Bezoutians of a polynomial and its derivative to  construct polynomials with real coefficients where the number of real roots can be counted explicitly. Thereby, irreducible polynomials in $\Q [x]$ of prime degree $p$ are constructed for which the Galois group is either $S_p$ or $A_p$.

In this paper we study a family of polynomials with non-real roots whose degree is not necessarily prime. Given a polynomial $g(x)= \sum_{i=0}^s t_i x^i$  and with $\gamma$ number of non-real roots we construct a polynomial $f(t, x) = x^n + t \, g(x)$ which has $\gamma, \gamma+1, \gamma+2$ non-real roots for certain values of $t\in \R$; see Theorem~\ref{thm5.2}.   The values of $t \in \R$ are given in terms of the Bezoutian matrix of polynomials or equivalently the discriminant of $f(t, x)$ with respect to $x$.  This is the focus of Section~\ref{Bezoutians} in the paper. 

While most of the efforts have been focusing on the case of irreducible polynomials over $\Q$ which have real roots, 
the case of  polynomials with no real roots is equally interesting.  How should an irreducible polynomial over $\Q$ with all non-real roots must look like?  What can be said about the Galois group of such totally complex polynomials?    In \cite{e-sh} is developed a reduction theory for such polynomials via the hyperbolic center of mass.   A special case of Theorem~\ref{thm5.2} provides a class of totally complex polynomials.  

\medskip

\noindent \textbf{Notation} For any  polynomial $f(x)$ by $\D_{f, x}$ we denote its discriminant with respect to $x$. If $f$ is a univariate polynomial then $\D_f$ is used and the leading coefficient is denoted by $\mbox{led} (f)$. 
Throughout this paper the ground field is a field of characteristic zero.

\section{Preliminaries}

Let $f_1(x)$, $f_2(x)$ be polynomials over a field $F$ of characteristic zero and, let $n$ be an integer which is greater than or equal to $\max \{ \mathrm{deg} f_1, \mathrm{deg} f_2 \}$. Then, we put
\begin{align*}
B_{n}(f_{1},f_{2}) :&= \frac{f_{1}(x)f_{2}(y)-f_{1}(y)f_{2}(x)}{x-y}= \sum_{i,j=1}^{n} \alpha _{ij}x^{n-i}y^{n-j}\in F[x,y], \\ 
M_{n}(f_{1},f_{2}) :&= (\alpha_{ij})_{1 \leq i,j \leq n}.
\end{align*} 
The matrix $M_{n}(f_{1},f_{2})$ is called the \textit{Bezoutian} of $f_1$ and $f_2$. Clearly, $B_n(f_1, f_1)=0$ and hence $M_n(f_1, f_1)$ is the zero matrix.   The following properties hold true; see \cite[Theorem 8.25]{fuh}  for details. 

\begin{prop}\label{prop5.1}
The following are true:
\begin{enumerate}
\item[$(1)$] $M_n(f_1, f_2)$ is an $n \times n$ symmetric matrix over $F$. 

\item[$(2)$] $B_{n}(f_{1},f_{2})$  is linear in $f_{1}$ and $f_{2}$, separately. 

\item[$(3)$] $B_{n}(f_{1},f_{2})=-B_{n}(f_{2},f_{1})$.   
\end{enumerate}
\end{prop}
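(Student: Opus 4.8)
The plan is to read all three items off the defining identity
\[
B_n(f_1,f_2) = \frac{f_1(x)f_2(y) - f_1(y)f_2(x)}{x-y},
\]
the only point that needs an argument being that the right-hand side is genuinely an element of $F[x,y]$ whose monomials all have the shape $x^{n-i}y^{n-j}$ with $1\le i,j\le n$, so that the array $(\alpha_{ij})_{1\le i,j\le n}$ is a well-defined $n\times n$ matrix over $F$. For this I would set $N(x,y):=f_1(x)f_2(y)-f_1(y)f_2(x)$, observe that $N(x,x)=0$, and conclude that $x-y$ divides $N$ in $F[x,y]$: regarding $N$ as a polynomial in $x$ over the domain $F[y]$, the divisor $x-y$ is monic in $x$ and vanishes at $x=y$, so the factor theorem applies. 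Hence $B_n(f_1,f_2)=N/(x-y)\in F[x,y]$, so its coefficients lie in $F$; and since the degree of $N$ in $x$ is at most $\max(\deg f_1,\deg f_2)\le n$, the quotient has degree at most $n-1$ in $x$, and symmetrically at most $n-1$ in $y$. Thus only monomials $x^{n-i}y^{n-j}$ with $1\le i,j\le n$ occur, which is the claimed shape.

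With this in hand, the symmetry asserted in item $(1)$ is immediate: interchanging $x$ and $y$ in the defining fraction negates numerator and denominator at once, so $B_n(f_1,f_2)(y,x)=B_n(f_1,f_2)(x,y)$; comparing coefficients of $x^{n-i}y^{n-j}$ on the two sides yields $\alpha_{ji}=\alpha_{ij}$, i.e.\ $M_n(f_1,f_2)$ is symmetric. For item $(2)$: fixing $f_2$ and $n$, the numerator $N(x,y)$ is visibly $F$-linear in $f_1$, and since $B_n(f_1,f_2)$ is the unique polynomial satisfying $(x-y)\,B_n(f_1,f_2)=N(x,y)$, uniqueness of the quotient transfers this linearity to $B_n(f_1,f_2)$; the same argument applies in the $f_2$ slot. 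Item $(3)$ follows by inspection of the fraction, since interchanging $f_1$ and $f_2$ in $N(x,y)$ changes its sign, hence so does $B_n(f_1,f_2)$.

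There is essentially no serious obstacle here: the only real content is the divisibility-and-degree bookkeeping of the first paragraph certifying that $M_n(f_1,f_2)$ is an honest $n\times n$ matrix over $F$, after which items $(1)$--$(3)$ are formal consequences of the symmetry and bilinearity manifest in the formula. (All three are also recorded, with proof, as \cite[Theorem~8.25]{fuh}.)
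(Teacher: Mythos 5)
Your proof is correct. Note that the paper itself offers no argument for this proposition: it simply records the three properties and refers the reader to \cite[Theorem~8.25]{fuh}, so there is no in-paper proof to compare against. Your write-up supplies exactly the content that such a citation leaves implicit, and it does so properly: the factor-theorem argument (viewing $N(x,y)$ as a polynomial in $x$ over the domain $F[y]$ with the monic divisor $x-y$) correctly certifies that $B_n(f_1,f_2)\in F[x,y]$, the degree bound $\le n-1$ in each variable justifies indexing the coefficients as $\alpha_{ij}$ with $1\le i,j\le n$, and items $(1)$--$(3)$ then follow from the manifest symmetries of the defining fraction together with uniqueness of the quotient in an integral domain. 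No gaps.
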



When $f_2=f_1^{\prime}$, the formal derivative of $f_1$ (with respect to the indeterminate $x$), we often write $B_n(f_1):=B_{n}(f_{1},f_{1}^{\prime})$.
From now on, for any  degree $n\geq 2$ polynomial $f(x) \in \R[x]$ we will denote by $M_n (f):=M_n (f, f^\prime)$ as above. The matrix $M_n(f)$ is called the \textbf{Bezoutian matrix} of $f$.

\begin{rem}
It is often the case that the matrix $M^{\prime}_{n}(f_{1},f_{2})=(\alpha_{ij}^{\prime})_{1\leq i,j \leq n}$ defined by the generating function
\begin{align*}
B^{\prime}_{n}(f_{1},f_{2}) :&= \frac{f_{1}(x)f_{2}(y)-f_{1}(y)f_{2}(x)}{x-y}= \sum_{i,j=1}^{n} \alpha _{ij}^{\prime}x^{i-1}y^{j-1}\in F[x,y]
\end{align*} 
is called the Bezoutian of $f_1$ and $f_2$. But no difference can be seen between these two definitions as far as we consider the corresponding quadratic forms
\[
\sum_{i,j=1}^n\alpha_{ij}x_ix_j \hspace{3mm} \text{and} \hspace{3mm} \sum_{i,j=1}^n\alpha_{ij}^{\prime}x_ix_j.
\]
In fact, these two quadratic forms are equivalent over the prime field $\Q$ $(\subset F)$ since we have
$M_n^{\prime}(f_1, f_2)={}^tJ_nM_n(f_1, f_2)J_n$,
where
\[
J_n
=
\left[
\begin{array}{cccc}
\multicolumn{1}{c}{\raisebox{-7pt}[0pt][0pt]{\huge $0$}} & & & 1 \\
& & 1 & \\
& \rotatebox[origin=c]{305}{\vdots} &  & \\
1 & & & \multicolumn{1}{c}{\raisebox{0pt}[0pt][0pt]{\huge $0$}}  
\end{array}
\right]
\]
is an $n \times n$ anti-identity matrix. This implies that above two quadratic forms are equivalent over $\Q$ or more precisely, over the ring of rational integers $\Z$. 
\end{rem}

Let $f(x)\in \R[x]$ be a degree $n\geq 2$    polynomial which  is given by
\[ f(x) = a_0 + a_1 x + \dots + a_n x^n \]
Then over $\R$ this polynomial is   factored    as
\[ f(x) = a \, \prod_{j=1}^r   (x-\alpha_j) \, \prod_{j=1}^s (x^2 +a_i x+b_i).
\]
for some $\alpha_1, \dots , \alpha_r \in \R$ and $a_i, b_i, a \in \R$.

The pair of integers  $(r, s)$ is called the \textbf{signature} of $f(x)$.  Obviously $\deg f = 2s+r$.   If $s=0$ then $f(x)$ is called \textbf{totally real} and if $r=0$ it is called \textbf{totally complex}.
Equivalently the above terminology can be defined for binary forms $f(x, z)$. 

Throughout this paper, for a univariate polynomial $f$, its discriminant will be denoted by $\D_f$. For any two polynomials $f_1(x)$, $f_2 (x)$ the resultant with respect to $x$ will be denoted by $\Res (f_1, f_2, x)$.
We notice the following elementary fact, its proof is elementary and we skip the details.

\begin{rem}
For any polynomial  $f (x)$, the determinant of the Bezoutian is the same as the discriminant up to a multiplication by a constant. 
More precisely,    
\[   
\D_f = \frac 1 {\mbox{led} (f)} \det M_n (f ),
\] 
where $\mbox{led} (f)$ is the leading coefficient of $f(x)$. 
\end{rem}

If $f(x)\in \Q[x]$ is irreducible and its    degree   is a prime number, say $\deg f = p$, then there is enough known for the Galois group of polynomials with some non-real roots; see \cite{b-sh}, \cite{shimol}, \cite{otake} for details.  
If the number of non-real roots is "small" enough with respect to the prime degree $\deg f =p$ of the polynomial, then the Galois group is $A_p$ or $S_p$. Furthermore, using the classification of finite simple groups one can provide a complete list of possible Galois groups for every polynomial of prime degree $p$ which has non-real roots; see \cite{b-sh} for details. 


On the other extreme are the polynomials which have all roots non-real.  We called them above, totally complex polynomials. 
We have the following:

\begin{lem}
The followings are equivalent:

     i)  $f(x) \in \R[x]$ is totally complex
     
     ii)  $f (x)$ can be written as 
\[ f(x) = a \displaystyle \prod_{i=1}^n f_i \]
where $f_i = x^2+ a_i x + b_i$, for $i=1, \dots , n$  and $a_i, b_i, a \in \R$.  Moreover, the determinant of the Bezoutian  $M_n (f) $ is given by
\[   
\D_f = \frac 1 {\mbox{led} (f)} \, \det M_n (f ) =  \displaystyle \prod_{i=1}^n \D_{f_i} \, \cdot \, \prod_{i, j, i\neq j}^n \left( \Res (f_i, f_j, x)   \right)^2
\]
where $\mbox{led} (f)$ is the leading coefficient of $f(x)$. 
     
     ii) the index of inertia of Bezoutian $M(f)$  is 0
    
     iii) if $\D_f\neq 0$ then the equivalence class of $M (f)$  in the Witt ring $W(R)$ is 0.  
     
\end{lem}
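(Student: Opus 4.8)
The plan is to derive all the equivalences from three classical ingredients: (1) the factorization of a real polynomial into linear factors and irreducible quadratics; (2) the multiplicativity of the discriminant, $\D_{gh}=\D_g\,\D_h\,\Res(g,h,x)^2$, together with its relation to the resultant; and (3) \emph{Hermite's theorem}, which asserts that the signature of the Bezoutian quadratic form $M_n(f)=M_n(f,f')$ equals the number of distinct real roots of $f$ (while its rank equals the number of distinct roots of $f$ in $\C$). For readability I label the four assertions (a) ``$f\in\R[x]$ is totally complex'', (b) ``$f$ is a constant times a product of irreducible real quadratics $f_i$, and $\D_f=\mbox{led}(f)^{-1}\det M_n(f)=\prod_i\D_{f_i}\prod_{i\neq j}(\Res(f_i,f_j,x))^2$'', (c) ``the index of inertia of $M_n(f)$ is $0$'', and (d) ``if $\D_f\neq 0$ then the class of $M_n(f)$ in $W(\R)$ is $0$''; I will prove (a)$\Leftrightarrow$(b), (a)$\Leftrightarrow$(c), and (c)$\Leftrightarrow$(d).

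For (a)$\Leftrightarrow$(b): over $\R$ we have $f=a\prod_{j=1}^r(x-\alpha_j)\prod_{i=1}^s(x^2+a_ix+b_i)$ with each quadratic factor irreducible (equivalently $a_i^2<4b_i$). By definition $f$ is totally complex iff $r=0$, which is exactly the assertion that $f$ is a constant times a product of $n=\tfrac12\deg f$ irreducible real quadratics $f_i=x^2+a_ix+b_i$; conversely such a product manifestly has no real root. For the discriminant identity one starts from the earlier Remark, $\D_f=\mbox{led}(f)^{-1}\det M_n(f)$, and applies $\D_{gh}=\D_g\D_h\,\Res(g,h,x)^2$ inductively along $f=a\prod_i f_i$ (taking $f$ monic, or absorbing the appropriate power of $a$); since every $f_i$ has even degree, $\Res(f_i,f_j,x)=\Res(f_j,f_i,x)$, and collecting factors gives the stated product, the product over $i\neq j$ being read over the $\binom{n}{2}$ unordered pairs.

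For (a)$\Leftrightarrow$(c) — the step carrying the real content — one invokes Hermite's theorem (cf.\ \cite{fuh}): the index of inertia, i.e.\ the signature, of the real symmetric matrix $M_n(f)$ equals the number of distinct real roots of $f$. Hence the signature vanishes iff $f$ has no real root, i.e.\ iff $r=0$, i.e.\ iff $f$ is totally complex. (Here ``index of inertia $0$'' must be read as vanishing signature: since that signature equals a nonnegative integer, it cannot be forced to $0$ by the vanishing of the positive or the negative index alone, except in the trivial case.) This is the step I expect to be the main obstacle if Hermite's theorem is to be proved rather than cited; the standard argument realizes $M_n(f,f')$, over $\C$, as the Gram matrix of a form that splits into a single $\langle 1\rangle$ for each distinct real root together with one hyperbolic plane for each conjugate pair of non-real roots, yielding the rank and signature statements simultaneously.

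For (c)$\Leftrightarrow$(d): granting the foregoing and assuming $\D_f\neq 0$ (so that $\det M_n(f)=\mbox{led}(f)\,\D_f\neq 0$ and $M_n(f)$ is a nondegenerate real form of rank $\deg f$), a nondegenerate quadratic form over $\R$ with $p$ positive and $q$ negative squares is hyperbolic — equivalently trivial in $W(\R)$ — precisely when $p=q$, i.e.\ precisely when its signature vanishes; conversely $W(\R)\cong\Z$ via the signature, so a trivial Witt class forces signature $0$. The hypothesis $\D_f\neq 0$ serves only to make $M_n(f)$ nondegenerate; since the Witt class of a degenerate form is that of its regular part, whose signature is again the number of distinct real roots, the equivalence in fact holds unconditionally. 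This completes the three biconditionals, and hence the proof.
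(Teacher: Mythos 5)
Your argument is correct in substance, but you should know that the paper does not actually prove this lemma: its entire proof is the single sentence that the equivalences ``can be found in \cite{fuh}.'' Your write-up therefore supplies what the paper omits, and the three ingredients you use are the right ones --- the real factorization for the first equivalence, Hermite's theorem (which the paper itself records later as Proposition~\ref{prop5.2}, citing the same reference) for the equivalence with vanishing signature, and the isomorphism $W(\R)\cong\Z$ given by the signature for the Witt-ring statement. Two remarks. First, you are right to insist that the product $\prod_{i\neq j}\left(\Res(f_i,f_j,x)\right)^2$ be read over unordered pairs: over ordered pairs the identity $\D_{gh}=\D_g\,\D_h\,\Res(g,h,x)^2$ would produce exponent $4$ rather than $2$, so as literally printed the formula is off; this is a defect of the statement, not of your argument (the same goes for the unaddressed powers of the leading coefficient $a$, which you flag). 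Second, your closing claim that the equivalence with the Witt-class condition ``holds unconditionally'' overstates things slightly: as written, that item is an implication with hypothesis $\D_f\neq 0$, so when $\D_f=0$ it is vacuously true while the index of inertia can be nonzero --- e.g.\ $f=(x-1)^2(x^2+1)$ has $\D_f=0$ and signature $1$ --- so the two items are not literally equivalent for such $f$. What is unconditionally true is the statement you actually prove, namely that the Witt class of the regular part of $M_n(f)$ vanishes iff the signature does; again this is an imprecision in the lemma itself (which also labels two different items ``ii)'') rather than a gap in your reasoning.
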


\proof
The equivalence between  i), ii), and iii) can be found in \cite{fuh}.

\endproof

It is not clear when such polynomials are  irreducible over $\Q$?  If that's the case, what is the Galois group $\Gal (f)$?
Clearly the group generated by the involution $(1,2)(3,4)\cdots (2n+1, 2n)$ is embedded in $\Gal (f)$.   Is $\Gal (f)$ larger in general?

\section{On the number of real roots of polynomials}\label{Bezoutians}


For any  degree $n\geq 2$ polynomial $f(x) \in \R[x]$ and any  symmetric matrix $M:= M_n(f) $ with real entries, let $N_{f}$ be the \textbf{number of distinct real roots} of $f$ and $\sigma(M)$ be the index of inertia of $M$, respectively. The next result plays a fundamental role throughout this section (\cite[Theorem 9.2]{fuh}).

\begin{prop}\label{prop5.2}
For any real polynomial $f \in \R[x]$, the number $N_f$ of its distinct real roots   is  the index of inertia of the Bezoutian matrix $M_n (f)$. In other words, 
$N_f=\sigma   \left(M_n(f) \right)$.
\end{prop}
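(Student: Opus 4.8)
The plan is to produce, after passing to $\C$, an explicit congruence that exhibits $M_n(f)$ as a direct sum of signed rank-one pieces (and indefinite rank-two pieces) indexed by the roots of $f$, and then to count signs. Since replacing $f$ by $cf$ for $c \in \R^{\times}$ replaces $M_n(f)$ by $c^2 M_n(f)$, we may assume $f$ is monic; write $f = \prod_{i=1}^d (x-\zeta_i)^{m_i}$ with $\zeta_i \in \C$ distinct and $\sum_i m_i = n$. Using $f'/f = \sum_{i=1}^d m_i/(x-\zeta_i)$ together with the identity $\tfrac{1}{x-y}\bigl(\tfrac{1}{y-\zeta}-\tfrac{1}{x-\zeta}\bigr) = \tfrac{1}{(x-\zeta)(y-\zeta)}$, one computes
\[
B_n(f,f') \;=\; \sum_{i=1}^d m_i\, P_i(x)\,P_i(y), \qquad P_i(x) := \frac{f(x)}{x-\zeta_i}\in\C[x],
\]
each $P_i$ of degree $n-1$. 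Writing $v_i\in\C^n$ for the coefficient vector of $P_i$, this says $M_n(f) = \sum_{i=1}^d m_i\, v_i v_i^{\top}$.

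The first step is to observe that $v_1,\dots,v_d$ are $\C$-linearly independent: a relation $\sum_i\lambda_i v_i=0$ means $\sum_i\lambda_i f(x)/(x-\zeta_i)\equiv 0$, i.e.\ $\sum_i\lambda_i/(x-\zeta_i)\equiv 0$ as a rational function, which forces all $\lambda_i=0$ since simple partial fractions at distinct poles are independent. (When $f$ is squarefree the $P_i/P_i'(\zeta_i)$ are just the Lagrange interpolation basis, but the partial-fraction argument covers the general case.) In particular $\operatorname{rank} M_n(f)=d$, the number of distinct complex roots of $f$.

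The second step is the sign count. As $f$ is real, the non-real $\zeta_i$ occur in conjugate pairs of equal multiplicity, and $v_{\bar\zeta}=\overline{v_\zeta}$. A real root $\zeta_i$ contributes the positive-semidefinite rank-one term $m_i v_i v_i^{\top}$; a conjugate pair $\{\zeta,\bar\zeta\}$ of multiplicity $m$ contributes $m\bigl(v_\zeta v_\zeta^{\top}+\overline{v_\zeta}\,\overline{v_\zeta}^{\top}\bigr)=2m\,(aa^{\top}-bb^{\top})$, where $v_\zeta=a+ib$ with $a,b\in\R^n$ independent — a rank-two term of signature $0$. The real vectors $\{v_i:\zeta_i\in\R\}\cup\{a_k,b_k:\text{one $\zeta_k$ per conjugate pair}\}$ — there are $d$ of them, and they span over $\C$ the same space as $v_1,\dots,v_d$, hence are $\R$-independent — extend to a basis of $\R^n$, through which $M_n(f)$ becomes congruent over $\R$ to
\[
\operatorname{diag}\bigl(m_{i_1},\dots,m_{i_r},\ 2m_{j_1},-2m_{j_1},\ \dots,\ 2m_{j_{s'}},-2m_{j_{s'}}\bigr)\ \oplus\ 0_{n-d},
\]
with $r+2s'=d$. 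The signature of this matrix is $r$, the number of distinct \emph{real} roots of $f$, which is exactly $N_f$.

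The main obstacle is the non-squarefree case: one must be sure that the polynomials $P_i=f/(x-\zeta_i)$ genuinely are independent, so that the displayed expression is a true congruence rather than merely an upper bound on rank and signature, and one must keep the bookkeeping of multiplicities and conjugate pairs straight in order to see that each pair is signature-neutral. An equivalent alternative is to invoke Fuhrmann's identification of $M_n(f,f')$ with the Hankel matrix of the Newton power sums of the roots of $f$ and then apply Hermite's classical theorem to that Hankel form; the essential content — separating the real roots from the conjugate pairs — is the same either way.
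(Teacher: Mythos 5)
Your proof is correct, and it is necessarily ``different from the paper'' in the trivial sense that the paper does not prove Proposition~\ref{prop5.2} at all: it is imported verbatim from Fuhrmann's book (Theorem 9.2 there) and used as a black box. Your argument is a clean self-contained substitute. The key identity checks out: from $f'/f=\sum_i m_i/(x-\zeta_i)$ one gets $f(x)f'(y)-f(y)f'(x)=(x-y)\sum_i m_i P_i(x)P_i(y)$ with $P_i=f/(x-\zeta_i)$, hence $M_n(f)=\sum_i m_i v_iv_i^{\top}$; the partial-fraction argument correctly handles the non-squarefree case and recovers $\operatorname{rank}M_n(f)=d=n-\deg\gcd(f,f')$; and the conjugate pairs do collapse to the signature-neutral blocks $2m(aa^{\top}-bb^{\top})$ with $a,b$ real and independent, so the form diagonalizes with $r+s'$ positive and $s'$ negative entries. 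One point worth stating explicitly: the quantity the paper calls the ``index of inertia'' $\sigma(M)$ must be read as the \emph{signature} $p-q$ (positive minus negative inertia indices), not the positive index $p$ alone; your diagonal form has $p=r+s'$ and $q=s'$, so it is $p-q=r=N_f$ that matches the statement, and the paper's later lemma (totally complex $\iff\sigma=0$) confirms this reading. Your closing remark is also accurate: Fuhrmann's route identifies $M_n(f,f')$ with the Hankel matrix of the power sums $\sum_i m_i\zeta_i^{j+k}$ and invokes Hermite's theorem, which is the same real-roots-versus-conjugate-pairs count organized around the Vandermonde vectors $(1,\zeta_i,\zeta_i^2,\dots)$ instead of around the $P_i$; your version has the mild advantage of staying entirely inside the Bezoutian formalism the paper actually uses.
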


Let us cite one more result which says that the roots of a polynomial depend continuously on its coefficients (\cite[Theorem 1.4]{mar}, \cite[Theorem 1.3.1]{rs}).

\begin{prop}\label{prop5.3}
Let $f(x)=\sum_{l=0}^n a_lx^l \in \C[x]$ be a   polynomial   with distinct roots $\alpha_1$, $\cdots$, $\alpha_k$ of multiplicities $m_1$, $\cdots$, $m_k$ respectively.
Then, for any given a positive $\varepsilon<\min_{1\leq i<j \leq k} |\alpha_i-\alpha_j|/2$, there exists a real number $\delta>0$ such that any monic polynomial $g(x)=\sum_{l=0}^nb_lx^l \in \C[x]$ whose coefficients satisfy $|b_l-a_l|< \delta$, for $l=1, \cdots, n-1$, has exactly $m_j$ roots in the disk 
\[
\mathcal{D}(\alpha_j; \varepsilon)=\{ z \in \C \mid |z-\alpha_j|<\epsilon \} \ (j=1,\cdots, k).
\]
\end{prop}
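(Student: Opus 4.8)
The plan is to prove this via Rouch\'e's theorem, exploiting that the hypothesis $\varepsilon < \min_{1\le i<j\le k}|\alpha_i-\alpha_j|/2$ forces the disks $\mathcal{D}(\alpha_j;\varepsilon)$ to be pairwise disjoint, with $\mathcal{D}(\alpha_j;\varepsilon)$ containing the root $\alpha_j$ of multiplicity $m_j$ and no other root of $f$. Indeed, if $z$ lay in $\mathcal{D}(\alpha_i;\varepsilon)\cap\mathcal{D}(\alpha_j;\varepsilon)$ then $|\alpha_i-\alpha_j|\le|\alpha_i-z|+|z-\alpha_j|<2\varepsilon<\min_{i<j}|\alpha_i-\alpha_j|$, a contradiction; the same estimate shows that on each boundary circle $C_j:=\{z:|z-\alpha_j|=\varepsilon\}$ the polynomial $f$ has no zero. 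Since $C_j$ is compact, the first thing I would record is the strictly positive lower bound $\mu_j:=\min_{z\in C_j}|f(z)|>0$, and then set $\mu:=\min_{1\le j\le k}\mu_j>0$.

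Next I would bound the perturbation $g-f$ uniformly on $\bigcup_j C_j$. Writing $g(z)-f(z)=\sum_{l=0}^{n-1}(b_l-a_l)z^l$, where the top coefficients cancel because $f$ and $g$ are both monic, and noting that every $C_j$ lies inside the disk $|z|\le R$ with $R:=\max_{1\le j\le k}|\alpha_j|+\varepsilon$, I obtain $|g(z)-f(z)|\le\delta\sum_{l=0}^{n-1}R^l=:\delta\,C$ for all $z$ on any $C_j$, where $C$ depends only on $f$ and $\varepsilon$. Choosing $\delta<\mu/C$ then yields the strict inequality $|g(z)-f(z)|<\mu\le|f(z)|$ on every circle $C_j$. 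By Rouch\'e's theorem applied on each disk $\mathcal{D}(\alpha_j;\varepsilon)$, the polynomials $g$ and $f$ have the same number of zeros there, counted with multiplicity, namely $m_j$. Completeness follows from a degree count: $g$ is monic of degree $n$, so it has exactly $n$ zeros with multiplicity, while $\sum_{j=1}^k m_j=\deg f=n$; hence the zeros located in the disjoint disks exhaust all zeros of $g$, no zero escapes the union $\bigcup_j\mathcal{D}(\alpha_j;\varepsilon)$, and the count in each disk is exactly $m_j$, as claimed.

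The analytic core is routine once the two bounds are in place, so I expect the genuine care to go into the bookkeeping at the ends of the degree range. Monicity of both $f$ and $g$ is what kills the $l=n$ term of $g-f$ and, equally important, what guarantees $\deg g=n$ so that the total count $\sum_j m_j=n$ is not spoiled by a root drifting off to infinity; this is precisely why the completeness step cannot be omitted. One must also read the coefficient hypothesis as controlling $|b_l-a_l|$ for every $0\le l\le n-1$ (the constant term included) so that it enters the estimate $\delta\,C$; if the constant terms were left free, a nearby $g$ could have a zero wander away from all the $\alpha_j$, so this indexing point is substantive rather than cosmetic.
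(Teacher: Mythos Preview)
The paper does not supply a proof of this proposition: it is quoted as a standard result with citations to \cite[Theorem~1.4]{mar} and \cite[Theorem~1.3.1]{rs}. Your Rouch\'e-theorem argument is the classical proof and is correct in substance; the disjointness of the disks, the positive lower bound $\mu$ on the boundary circles, the uniform bound $\delta C$ on $|g-f|$, and the final degree count are exactly what is needed. Your closing remark is also well taken: as printed, the coefficient hypothesis runs over $l=1,\dots,n-1$ and omits the constant term, which would make the statement false (take $b_0$ large), so the intended range must be $0\le l\le n-1$; likewise $f$ must itself be monic for the $l=n$ term to cancel. These are evidently typographical slips in the cited statement rather than gaps in your argument.
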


Let $n$, $s$  be positive integers such that $n>s$ and let 
\begin{equation}
\begin{split}
g(t_0,\cdots, t_s ; x) & = \sum_{k=0}^{s}t_{s-k}x^{s-k}, \\
 f^{(n)}(t_0,\cdots, t_s, t ; x) & =x^n+t \cdot g(t_0,\cdots,t_s ; x)\\
\end{split}
\end{equation}
be polynomials in $x$ over $E_1=\R(t_0, \cdots, t_s)$, $E_2=\R(t_0, \cdots, t_s, t)$, respectively. 
Here, $E_1$ (resp., $E_2$) is a rational function field with $s+1$ $(resp., (s+2))$ variables $t_0, \cdots, t_s$ $(resp., (t_0, \cdots, t_s, t))$. 
To ease notation, let us put
\begin{align*}
g(x)=g(t_0,\cdots, t_s ; x), \ f(t ; x)=f^{(n)}(t_0,\cdots, t_s, t ; x)
\end{align*}
and for any real vector ${\bm v}=(v_0, \cdots, v_s) \in \R^{s+1}$, we put
\begin{equation}
\begin{aligned}
g_{{\bm v}}(x)=g(v_0,\cdots, v_s ; x), \; \;  f_{\bm v}(t ; x)=f^{(n)}(v_0,\cdots, v_s, t ; x).
\end{aligned} 
\end{equation}
By using Proposition \ref{prop5.2}, we can prove the next theorem (\cite[Main Theorem 1.3]{otake}).

\begin{thm}\label{thm5.1}
Let ${\bm r}=(r_0, \cdots, r_s) \in \R^{s+1}$ be a vector such that $N_{g_{\bm r}}=s$. Let us consider $f_{\bm r}(t ; x)=f^{(n)}(r_0, \cdots, r_s, t ; x)$ as a polynomial over $\R(t)$ in $x$ and put 
\[ 
P_{\bm r}(t)=\det M_n(f_{\bm r}(t ; x))=\det M_n(f_{\bm r}(t ; x), f_{\bm r}^{\prime}(t ; x)),
\] 
where $f_{\bm r}^{\prime}(t ; x)$ is a derivative of $f_{\bm r}(t ; x)$ with respect to $x$. Then, for any real number $\xi>\alpha_{\bm r}=\max\{ \alpha \in \R \mid P_{\bm r}(\alpha)=0 \}$, we have
\[
N_{f_{\bm r}(\xi ; x)}
=
\begin{cases}
s+1 & \text{if $n-s$ $:$ odd} \\
s & \text{if $n-s$ $:$ even, $r_s>0$} \\
s+2 & \text{if $n-s$ $:$ even, $r_s<0$}.
\end{cases}
\]
\end{thm}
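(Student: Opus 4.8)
The plan is to use Hermite's theorem (Proposition~\ref{prop5.2}) to replace the count $N_{f_{\bm r}(\xi;x)}$ by the index of inertia $\sigma(M_n(f_{\bm r}(\xi;x)))$, to show that this index is constant on the interval $(\alpha_{\bm r},\infty)$ by a continuity argument, and then to evaluate the constant by analysing the roots of $f_{\bm r}(\xi;x)=x^n+\xi\,g_{\bm r}(x)$ as $\xi\to+\infty$.

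First note that the hypothesis $N_{g_{\bm r}}=s$ forces $\deg g_{\bm r}=s$ (so $r_s\neq 0$) and makes all $s$ roots of $g_{\bm r}$ real and simple, since a nonzero polynomial of degree $\le s$ has at most $s$ distinct roots. The entries of $M_n(f_{\bm r}(\xi;x))$ are polynomials in the coefficients of $f_{\bm r}(\xi;x)$, hence polynomials in $\xi$, so $\xi\mapsto M_n(f_{\bm r}(\xi;x))$ is continuous; moreover $\det M_n(f_{\bm r}(\xi;x))=P_{\bm r}(\xi)$, which equals $\D_{f_{\bm r}(\xi;x)}$ because $f_{\bm r}(\xi;x)$ is monic in $x$ (cf.\ the Remark identifying $\det M_n$ with the discriminant). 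Since $P_{\bm r}(\xi)\neq 0$ for all $\xi\in(\alpha_{\bm r},\infty)$ by the definition of $\alpha_{\bm r}$, the symmetric matrix $M_n(f_{\bm r}(\xi;x))$ stays nonsingular on this connected interval; its eigenvalues vary continuously with $\xi$ and never vanish, so $\sigma(M_n(f_{\bm r}(\xi;x)))$ — a function of their signs — is constant on $(\alpha_{\bm r},\infty)$, and hence so is $N_{f_{\bm r}(\xi;x)}$ by Proposition~\ref{prop5.2}. (We will see below that $P_{\bm r}\not\equiv 0$, so $\alpha_{\bm r}$ is a genuine finite number; note also $P_{\bm r}(0)=\det M_n(x^n)=0$, whence $\alpha_{\bm r}\geq 0$ and every $\xi>\alpha_{\bm r}$ is positive.)

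To compute the constant it suffices to look at large $\xi$, and I would split the roots of $f_{\bm r}(\xi;x)$ into a bounded family and an escaping family. On any fixed bounded set disjoint from the roots of $g_{\bm r}$ the value $|g_{\bm r}|$ is bounded below, so $f_{\bm r}(\xi;x)\neq 0$ there once $\xi$ is large; near a simple real root $\rho$ of $g_{\bm r}$, where $g_{\bm r}$ changes sign and $g_{\bm r}'(\rho)\neq 0$, the real function $x\mapsto x^n+\xi\,g_{\bm r}(x)$ changes sign across $\rho$ and has nonvanishing derivative near $\rho$ for large $\xi$, hence exactly one simple real root there; this gives $s$ simple real roots, one near each root of $g_{\bm r}$. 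For the escaping family, substitute $x=\xi^{1/(n-s)}u$ and divide by $\xi^{n/(n-s)}$: the resulting monic polynomial $\psi_\xi(u)$ converges, uniformly on compact sets as $\xi\to+\infty$, to $u^s(u^{n-s}+r_s)$, which has $0$ as a root of multiplicity $s$ and $n-s$ distinct nonzero roots $\omega_1,\dots,\omega_{n-s}$ with $\omega_j^{\,n-s}=-r_s$. By Proposition~\ref{prop5.3}, for large $\xi$ the polynomial $\psi_\xi$ has exactly $s$ roots near $0$ and one simple root near each $\omega_j$, which together exhaust the $n$ roots of $f_{\bm r}(\xi;x)$. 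Since for large $\xi$ the $s$ bounded real roots found above rescale into the small disk around $0$, they are exactly the $s$ roots of $\psi_\xi$ near $0$; and as $\psi_\xi$ has real coefficients (recall $\xi>0$), the root of $\psi_\xi$ near $\omega_j$ is real if and only if $\omega_j$ is real. In particular all $n$ roots of $f_{\bm r}(\xi;x)$ are simple for large $\xi$, which yields $P_{\bm r}\not\equiv 0$.

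It remains to count the real $\omega_j$: the equation $\omega^{n-s}=-r_s$ has exactly one real solution if $n-s$ is odd; if $n-s$ is even it has no real solution when $r_s>0$ and two real solutions when $r_s<0$. Adding the $s$ bounded real roots gives, for all large $\xi$, $N_{f_{\bm r}(\xi;x)}=s+1$, $s$, or $s+2$ in the three cases, and by the constancy established above the same value holds for every $\xi>\alpha_{\bm r}$. The main obstacle is the asymptotic step: making the rescaling rigorous, applying Proposition~\ref{prop5.3} to $\psi_\xi$, verifying that the $s$ roots of $\psi_\xi$ near $0$ coincide with the bounded roots already located near the roots of $g_{\bm r}$ (so that all $n$ roots are accounted for without double counting), and reading off which of the $n-s$ escaping roots are real.
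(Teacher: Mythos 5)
Your argument is correct, but it takes a genuinely different route from the one in the paper. The paper does not reprove Theorem~\ref{thm5.1} directly (it cites \cite{otake}), but its proof of the generalization, Theorem~\ref{thm5.2}, shows the intended method: one computes the Bezoutian $M_n(f_{\bm r}(t;x))$ explicitly, reduces it by congruence transformations to a block form $D_{\bm r}(t)_{n-s}\oplus C_{\bm r}(t)_{n-s}$, reads off the inertia $1$, $0$ or $2$ of the first block according to the parity of $n-s$ and the sign of $r_s$, and identifies the inertia of the second block with $N_{g_{\bm r}}$ by comparing characteristic polynomials asymptotically; Proposition~\ref{prop5.2} then converts inertia back into a root count, and Lemma~\ref{lem5.5} pushes the answer from large $\xi$ down to all $\xi>\alpha_{\bm r}$. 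You use the Bezoutian only for that last, soft step — constancy of $\sigma(M_n(f_{\bm r}(\xi;x)))$, hence of $N_{f_{\bm r}(\xi;x)}$, on the interval where $P_{\bm r}=\D_{f_{\bm r}(\xi;x)}$ does not vanish, which is exactly Lemma~\ref{lem5.5} reproved via eigenvalue continuity instead of Proposition~\ref{prop5.3} — and you evaluate the constant by a direct Newton-polygon analysis of $x^n+\xi g_{\bm r}(x)$ as $\xi\to+\infty$: $s$ simple real roots clustering at the (all real, all simple) roots of $g_{\bm r}$, plus $n-s$ roots of magnitude $\xi^{1/(n-s)}$ governed by $u^{n-s}+r_s=0$, contributing $1$, $0$ or $2$ real roots in the three cases. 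Your bookkeeping is sound: the rescaled polynomial $\psi_\xi$ has exactly $s$ roots near $0$, which must coincide with the $s$ bounded real roots already located, so nothing is double-counted, every root is simple, and $P_{\bm r}\not\equiv 0$ falls out as a byproduct (needed, together with $P_{\bm r}(0)=0$, for $\alpha_{\bm r}$ to be well defined). The trade-off: the paper's heavier matrix computation yields the explicit diagonalization of the quadratic form and carries over verbatim to the case $N_{g_{\bm r}}=\gamma<s$ of Theorem~\ref{thm5.2}, whereas your argument is shorter and more elementary for the totally real case $N_{g_{\bm r}}=s$, and would extend to $\gamma<s$ only after adding the (routine) observation that the $s$ bounded roots of $f_{\bm r}(\xi;x)$ track all roots of $g_{\bm r}$, real and non-real alike.
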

By this theorem and a theorem of Oz Ben-Shimol \cite[Theorem 2.6]{shimol}, we can obtain an algorithm to construct prime degree $p$ polynomials with given number of real roots, and whose Galois groups are isomorphic to the symmetric group $S_p$ or the alternating group $A_p$ (\cite[Corollary 1.6]{otake}).

In this section, we extend this theorem as follows;
\begin{thm} \label{thm5.2}
Let ${\bm r}=(r_0, \cdots, r_s) \in \R^{s+1}$ be a vector such that $g_{\bm r}(x)$ is a degree $s$ separable polynomial satisfying $N_{g_{\bm r}(x)}=\gamma$ $(0 \leq \gamma \leq s)$. Let us consider $f_{\bm r}(t ; x)=f^{(n)}(r_0, \cdots, r_s, t ; x)$ as a polynomial over $\R(t)$ in $x$ and put 
\[ 
P_{\bm r}(t)=\det M_n(f_{\bm r}(t ; x))=\det M_n(f_{\bm r}(t ; x), f_{\bm r}^{\prime}(t ; x)),
\] 
where $f_{\bm r}^{\prime}(t ; x)$ is a derivative of $f_{\bm r}(t ; x)$ with respect to $x$. 
Then, for any real number $\xi>\alpha_{\bm r}=\max\{ \alpha \in \R \mid P_{\bm r}(\alpha)=0 \}$, we have 
\begin{equation}\label{eq-thm2}
\begin{aligned}
N_{f_{\bm r}(\xi ; x)}
=
\begin{cases}
\gamma+1 & \text{if $n-s$ $:$ odd} \\
\gamma & \text{if $n-s$ $:$ even, $r_s>0$} \\
\gamma+2 & \text{if $n-s$ $:$ even, $r_s<0$}.
\end{cases}
\end{aligned}
\end{equation}
\end{thm}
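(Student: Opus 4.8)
The plan is to prove that $N_{f_{\bm r}(\xi;x)}$ is constant on the half-line $\xi \in (\alpha_{\bm r}, +\infty)$, and then to evaluate that constant by letting $\xi \to +\infty$. Since $f_{\bm r}(t;x) = x^n + t\,g_{\bm r}(x)$ is monic in $x$, the identity $\D_F = \det M_n(F)/\mathrm{led}(F)$ recorded above gives $P_{\bm r}(t) = \D_{f_{\bm r}(t;x)}$, the discriminant in $x$ regarded as a polynomial in $t$; it is not identically zero, because a common zero $\alpha$ of $f_{\bm r}(t;x)$ and $\partial_x f_{\bm r}(t;x)$ forces $t\bigl(\alpha\,g_{\bm r}'(\alpha) - n\,g_{\bm r}(\alpha)\bigr) = 0$, while $x\,g_{\bm r}'(x) - n\,g_{\bm r}(x)$ is a nonzero polynomial of degree $s$ (leading coefficient $(s-n)r_s \neq 0$), so only finitely many pairs $(t,\alpha)$ occur. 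Hence $\alpha_{\bm r}$ is finite, and for every $\xi > \alpha_{\bm r}$ the polynomial $f_{\bm r}(\xi;x)$ is separable of degree $n$ and $M_n(f_{\bm r}(\xi;x))$ is a nonsingular real symmetric matrix depending polynomially on $\xi$. As the index of inertia is locally constant on the set of nonsingular symmetric matrices, $\xi \mapsto \sigma\bigl(M_n(f_{\bm r}(\xi;x))\bigr)$ is constant on $(\alpha_{\bm r},+\infty)$; by Proposition~\ref{prop5.2} the same is true of $N_{f_{\bm r}(\xi;x)}$.

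It remains to compute this value for $\xi$ large, and here I would split the $n$ roots of $f_{\bm r}(\xi;x) = x^n + \xi\,g_{\bm r}(x)$ into $s$ \emph{bounded} roots and $n-s$ \emph{escaping} roots. Fix $R$ so that all $s$ roots of $g_{\bm r}$ lie in $\{|x| < R\}$; on $\{|x| = R\}$ one has $|g_{\bm r}(x)| \geq m > 0$ while $|x^n/\xi| = R^n/\xi < m$ once $\xi$ is large, so Rouch\'e's theorem applied to $\tfrac1\xi f_{\bm r}(\xi;x) = g_{\bm r}(x) + \tfrac1\xi x^n$ shows $f_{\bm r}(\xi;x)$ has exactly $s$ roots in $\{|x| < R\}$, and the same argument on a small disk about each (simple) root of $g_{\bm r}$ shows these cluster one-to-one near the roots of $g_{\bm r}$ as $\xi \to +\infty$. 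Since $g_{\bm r}$ is separable with exactly $\gamma$ real roots, for $\xi$ large exactly $\gamma$ of the bounded roots are real: a disk about a real root of $g_{\bm r}$ is symmetric under complex conjugation and contains a single root of $f_{\bm r}(\xi;x)$, which must therefore equal its own conjugate. For the escaping roots, set $x = \xi^{1/(n-s)}u$ and divide by $\xi^{n/(n-s)}$; a direct computation gives
\[
h_\xi(u) := \xi^{-n/(n-s)}\,f_{\bm r}\bigl(\xi;\xi^{1/(n-s)}u\bigr) = u^n + r_s u^s + \sum_{k=1}^{s} r_{s-k}\,\xi^{-k/(n-s)}\,u^{s-k},
\]
a monic degree-$n$ polynomial in $u$ converging, uniformly on compact sets as $\xi \to +\infty$, to $u^s\bigl(u^{n-s} + r_s\bigr)$. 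Rouch\'e's theorem on an annulus bounded away from $0$ then shows that $n-s$ of the roots of $h_\xi$ — equivalently, the rescaled escaping roots of $f_{\bm r}(\xi;x)$ — converge to the $n-s$ simple roots of $u^{n-s} = -r_s$; since $\xi^{1/(n-s)} > 0$ is real and $f_{\bm r}(\xi;x)$ has real coefficients, the number of real escaping roots equals the number of real $(n-s)$-th roots of $-r_s$.

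Now I would conclude by cases. If $n-s$ is odd, $-r_s$ has exactly one real $(n-s)$-th root, giving $N_{f_{\bm r}(\xi;x)} = \gamma + 1$. If $n-s$ is even and $r_s > 0$, then $-r_s < 0$ has no real $(n-s)$-th root, so $N_{f_{\bm r}(\xi;x)} = \gamma$; if $n-s$ is even and $r_s < 0$, then $-r_s > 0$ has the two real $(n-s)$-th roots $\pm|r_s|^{1/(n-s)}$, so $N_{f_{\bm r}(\xi;x)} = \gamma + 2$. These values hold for all sufficiently large $\xi$, hence — by the constancy established in the first step — for every $\xi > \alpha_{\bm r}$, which is \eqref{eq-thm2}.

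The step I expect to need the most care is the limit $\xi \to +\infty$: Proposition~\ref{prop5.3} applies only to perturbations preserving the degree, whereas $\tfrac1\xi f_{\bm r}(\xi;x) \to g_{\bm r}(x)$ drops $n-s$ roots to infinity. The two-scale device — Rouch\'e on fixed circles for the $s$ bounded roots, and the substitution $x = \xi^{1/(n-s)}u$, which restores a monic degree-$n$ family, for the $n-s$ escaping roots — is what makes the root count rigorous; and the separability of $g_{\bm r}$ is precisely what guarantees that the clustering is one-to-one and that the reality of each bounded root is forced by the conjugation-symmetry of its disk.
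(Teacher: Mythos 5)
Your proof is correct, but it follows a genuinely different route from the paper's. The paper works entirely inside the quadratic-form framework: it performs explicit congruence transformations on the Bezoutian $M_n(f_{\bm r}(t;x))$ to split off a hyperbolic-type block $D_{\bm r}(t)_{n-s}$ (whose inertia is $1$, $0$ or $2$ according to the parity of $n-s$ and the sign of $r_s$) from an $s\times s$ block $C_{\bm r}(t)_{n-s}$ with entries $\bar b_{ij}^{(\bm r)}t^2+O(t)$, and then shows $\sigma(C_{\bm r}(\xi)_{n-s})=\gamma$ for large $\xi$ by comparing characteristic polynomials. That comparison requires the coefficients $h_{s-k}^{(\bm r)}$ of the characteristic polynomial of $\bar B_{\bm r}$ to be nonzero, which forces the paper into its most laborious step (Lemmas \ref{lem5.3}--\ref{lem5.4}, a long case analysis of leading terms) plus a separate perturbation argument to handle vectors $\bm r$ where some $h_{s-k}^{(\bm r)}$ vanishes. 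You bypass all of this: your two-scale Rouch\'e argument (fixed circles for the $s$ roots clustering at the simple roots of $g_{\bm r}$, and the substitution $x=\xi^{1/(n-s)}u$ sending the $n-s$ escaping roots to the simple roots of $u^{n-s}=-r_s$) computes the real-root count for large $\xi$ directly, with the conjugation-symmetry of the disks converting root-clustering into a reality count; the Bezoutian then enters only through Proposition~\ref{prop5.2} and the local constancy of the index of inertia on nonsingular symmetric matrices, which replaces the paper's Lemma~\ref{lem5.5} (and is cleaner than its infimum argument). Your verification that $P_{\bm r}\not\equiv 0$, via $t\bigl(\alpha g_{\bm r}'(\alpha)-n g_{\bm r}(\alpha)\bigr)=0$ and $\deg\bigl(xg_{\bm r}'-ng_{\bm r}\bigr)=s$, is also sound (and one should note, as the paper does, that $P_{\bm r}(0)=0$, so the set defining $\alpha_{\bm r}$ is nonempty). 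The trade-off is that the paper's computation stays algebraic and produces explicit matrix identities reusable in its broader program, whereas your argument is shorter and more transparent but imports complex-analytic tools.
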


The above theorem can be restated as follows:

\begin{cor}
Let $f \in \R( t) [x]$ be given by
\[ f(t, x) = x^n + t \cdot \sum_{k=0}^s t_{s-k} x^{s-k} \]
and $\beta_1 <  \dots < \beta_m$ the distinct real roots of the degree $s$ polynomial  
\[ P(t) := \frac 1 {t^{n-1}} \, \D_{(f, x)} (t).\]
 For any $\xi > | \beta_m |$, the number of real roots of $f(\xi, x)$ is 
\[
\begin{aligned}
N_{f (\xi , x)}
=
\begin{cases}
\gamma+1 & \text{if $n-s$ $:$ odd} \\
\gamma & \text{if $n-s$ $:$ even, $t_s>0$} \\
\gamma+2 & \text{if $n-s$ $:$ even, $t_s<0$}.
\end{cases}
\end{aligned}
\]
where $\gamma$ is the number or real roots of $g(x) = \frac {f(x) - x^n } t \in \R[x]$.
\end{cor}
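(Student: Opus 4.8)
The plan is to obtain the Corollary as an immediate consequence of Theorem~\ref{thm5.2}; the only substance beyond a translation of notation is checking that the hypothesis $\xi>|\beta_m|$ here implies the hypothesis $\xi>\alpha_{\bm r}$ required there.

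Take ${\bm r}=(t_0,\dots,t_s)$ to be the coefficient vector of $g$, so that $g=g_{\bm r}$ and $f(t,x)=f_{\bm r}(t;x)$. As in Theorem~\ref{thm5.2} we assume $g$ is separable of degree $s$; this is tacit in the statement, being exactly what makes $P(t)$ genuinely of degree $s$, and it makes ``the number of real roots of $g$'' the same as the number of \emph{distinct} real roots $N_{g_{\bm r}(x)}$. With this identification $\gamma=N_{g_{\bm r}(x)}$ and $t_s=r_s$, so the three cases listed in the Corollary coincide verbatim with the three cases \eqref{eq-thm2} of Theorem~\ref{thm5.2}. It remains to reconcile the two polynomials called $P$ and the two lower bounds on $\xi$.

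Since $n>s$, the polynomial $f(t,x)=x^n+t\,g(x)$ is monic in $x$, so by the Remark that $\D_h=\det M_n(h)/\mbox{led}(h)$ we have $\D_{(f,x)}(t)=\det M_n(f_{\bm r}(t;x))=P_{\bm r}(t)$; hence the Corollary's $P(t)$ equals $P_{\bm r}(t)/t^{n-1}$. That this quotient is a polynomial of degree $s$ is a direct resultant computation: from $\D_{(f,x)}(t)=\pm\operatorname{Res}_x(f,f')=\pm\prod_{f(\alpha)=0}f'(\alpha)$ and $\alpha^n=-t\,g(\alpha)$ one gets $f'(\alpha)=\tfrac{t}{\alpha}\bigl(\alpha g'(\alpha)-n\,g(\alpha)\bigr)$ at each root $\alpha$ of $f$, and taking the product over these $n$ roots, together with separability of $g$ and $n>s=\deg(xg'-ng)$, yields $\D_{(f,x)}(t)=t^{n-1}P(t)$ with $\deg_t P=s$. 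Consequently $P$ and $P_{\bm r}$ have the same nonzero roots, the distinct real roots of $P_{\bm r}$ are $0,\beta_1,\dots,\beta_m$, and $\alpha_{\bm r}=\max\{\alpha\in\R:P_{\bm r}(\alpha)=0\}=\max\{0,\beta_m\}$.

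Finally we compare the bounds. If $\beta_m\geq 0$ then $\alpha_{\bm r}=\beta_m=|\beta_m|$, and if $\beta_m<0$ then every $\beta_i$ is negative and $\alpha_{\bm r}=0<|\beta_m|$; in either case $\xi>|\beta_m|$ forces $\xi>\alpha_{\bm r}$ (and when $P$ has no real root the bound reads $\xi>0=\alpha_{\bm r}$). Thus Theorem~\ref{thm5.2} applies with this ${\bm r}$ and returns exactly the stated value of $N_{f(\xi,x)}$. The one nonroutine ingredient is the resultant identity of the previous paragraph, which fixes the exact power $t^{n-1}$ dividing $\D_{(f,x)}(t)$ and the degree $s$ of the quotient; but this is a direct computation, and everything else is bookkeeping layered on Theorem~\ref{thm5.2}.
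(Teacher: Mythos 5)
Your proof is correct and follows the only reasonable route, which is the paper's implicit one: the corollary is presented there as a mere restatement of Theorem~\ref{thm5.2}, with no separate argument given. You in fact supply the two pieces of bookkeeping the paper leaves unsaid --- the identity $\D_{(f,x)}(t)=t^{n-1}P(t)$ with $\deg_t P=s$ via the resultant computation, and the observation that $\alpha_{\bm r}=\max\{0,\beta_m\}\leq|\beta_m|$ so that $\xi>|\beta_m|$ implies $\xi>\alpha_{\bm r}$ --- which is exactly what is needed to pass from the theorem's hypothesis to the corollary's.
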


The rest of the section is concerned with proving Thm.~\ref{thm5.2}. 
\subsection{The Bezoutian of $f(t; x)$}

First, let us put 
\begin{align*}
&A(t_0, \cdots, t_s, t)=(a_{ij}(t_0, \cdots, t_s, t))_{1 \leq i,j \leq n}=M_n(f(t ; x)) \in \mathrm{Sym}_n(E_2), \\
&B(t_0, \cdots, t_s)=(b_{ij}(t_0, \cdots, t_s))_{1 \leq i,j \leq s}=M_s(g(x)) \in \mathrm{Sym}_s(E_1). 
\end{align*}
For ease of notation, we also write
\begin{align*}
A(t_0, \cdots, t_s, t)=A(t)=(a_{ij}(t))_{1 \leq i,j \leq n}, \ B(t_0, \cdots, t_s)=B=(b_{ij})_{1 \leq i,j \leq s}
\end{align*}
and we put $B(t)=(b_{ij}(t))_{1 \leq i,j \leq s}=t^2B$. 
Then, by Proposition \ref{prop5.1}, we have
\begin{align*}
A(t)&=M_n(x^n+tg(x), nx^{n-1}+tg^{\prime}(x)) \\ 
&=nM_n(x^n, x^{n-1})-ntM_n(x^{n-1}, g(x))+tM_n(x^n, g^{\prime}(x))+t^2M_n(g(x), g^{\prime}(x)) \\
&=nM_n(x^n, x^{n-1})-nt\sum_{k=0}^{s}t_{s-k}M_n(x^{n-1}, x^{s-k}) \\
&\hspace{30.3mm}+t\sum_{k=0}^{s-1}(s-k)t_{s-k}M_n(x^n, x^{s-k-1})+t^2M_n(g(x), g^{\prime}(x)).
\end{align*}
\normalsize
\begin{lem} \label{lem5.1}
Let $\lambda ,\mu ,\nu$ be integers such that $\lambda \geq \mu > \nu \geq 0$. Then
$M_{\lambda }(x^{\mu }, x^{\nu }) = (m_{ij} )_{1\leq i,j\leq \lambda }$,
where
\begin{equation*}
m_{ij}=\begin{cases}
               1 & \text{$i+j=2\lambda-(\mu+\nu)+1$ \ $(\lambda-\mu+1\leq i,j\leq \lambda-\nu )$}, \\
               0 & \text{otherwise}.
             \end{cases}
\end{equation*}
\end{lem}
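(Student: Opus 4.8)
The plan is to compute the generating function $B_\lambda(x^\mu, x^\nu)$ directly from its definition and read off the coefficients. Recall that
\[
B_\lambda(x^\mu, x^\nu) = \frac{x^\mu y^\nu - x^\nu y^\mu}{x-y},
\]
and we must expand this as $\sum_{i,j=1}^\lambda m_{ij} x^{\lambda-i} y^{\lambda-j}$. First I would factor out $x^\nu y^\nu$ (legitimate since $\mu > \nu \geq 0$) to get
\[
B_\lambda(x^\mu, x^\nu) = x^\nu y^\nu \cdot \frac{x^{\mu-\nu} - y^{\mu-\nu}}{x-y} = x^\nu y^\nu \sum_{\ell=0}^{\mu-\nu-1} x^{\mu-\nu-1-\ell} y^{\ell} = \sum_{\ell=0}^{\mu-\nu-1} x^{\mu-1-\ell} y^{\nu+\ell}.
\]
This is a sum of $\mu - \nu$ monomials, each of total degree $(\mu-1-\ell) + (\nu+\ell) = \mu+\nu-1$.

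Next I would match this against the indexing convention $x^{\lambda-i} y^{\lambda-j}$. A term $x^{\mu-1-\ell} y^{\nu+\ell}$ contributes to the coefficient $m_{ij}$ where $\lambda - i = \mu-1-\ell$ and $\lambda - j = \nu+\ell$, i.e. $i = \lambda - \mu + 1 + \ell$ and $j = \lambda - \nu - \ell$. As $\ell$ runs over $0, 1, \ldots, \mu-\nu-1$, the index $i$ runs over $\lambda-\mu+1, \ldots, \lambda-\nu$ and correspondingly $j$ runs over $\lambda-\nu, \ldots, \lambda-\mu+1$; in particular both $i$ and $j$ lie in the range $[\lambda-\mu+1, \lambda-\nu]$ (note $\lambda - \mu + 1 \geq 1$ since $\lambda \geq \mu$, and $\lambda - \nu \leq \lambda$, so these are valid matrix indices). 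Eliminating $\ell$ gives $i + j = (\lambda-\mu+1+\ell) + (\lambda-\nu-\ell) = 2\lambda - (\mu+\nu) + 1$, which is exactly the stated condition, and each such $(i,j)$ arises from a unique $\ell$, so the coefficient is $1$; all other coefficients vanish. This matches the claimed formula.

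There is essentially no obstacle here: the only point requiring a moment's care is bookkeeping the index ranges to confirm that the nonzero entries genuinely sit inside the $\lambda \times \lambda$ matrix (which follows from $\lambda \geq \mu > \nu \geq 0$) and that the anti-diagonal band described by $i+j = 2\lambda-(\mu+\nu)+1$ intersected with the box $[\lambda-\mu+1,\lambda-\nu]^2$ is precisely the set of $\mu-\nu$ pairs produced by the expansion. One could alternatively phrase the whole computation as: $M_\lambda(x^\mu, x^\nu)$ is the $\lambda \times \lambda$ matrix with an anti-identity block of size $\mu-\nu$ placed in rows/columns $\lambda-\mu+1$ through $\lambda-\nu$ and zeros elsewhere, which is what the case distinction in the statement encodes.
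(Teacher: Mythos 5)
Your proof is correct and follows essentially the same route as the paper: expand the generating function $B_\lambda(x^\mu,x^\nu)$ as a sum of $\mu-\nu$ monomials and translate the exponents into the index convention $x^{\lambda-i}y^{\lambda-j}$ (your sum over $\ell$ is the paper's sum over $k=\ell+1$). The extra step of factoring out $x^\nu y^\nu$ and the explicit check that the indices land inside the $\lambda\times\lambda$ matrix are harmless refinements of the same argument.
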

\begin{proof}
By definition, we have
\begin{align*}
B_{\lambda}(x^{\mu}, x^{\nu})&=\dfrac{x^{\mu}y^{\nu}-x^{\nu}y^{\mu}}{x-y} \\
&=\sum_{k=1}^{\mu-\nu} x^{\mu-k}y^{\nu+k-1}
=\sum_{k=1}^{\mu-\nu}x^{\lambda-(\lambda-\mu+k)}y^{\lambda-(\lambda-\nu-k+1)},
\end{align*}
which implies
\begin{align*}
m_{ij}
&=
\begin{cases}
1 & \text{$(i,j)=(\lambda-\mu+k, \lambda-\nu-k+1)$ \ $(1 \leq k \leq \mu-\nu)$} \\
0 & \text{otherwise}
\end{cases} \\
&=
\begin{cases}
1 & \text{$i+j=2\lambda-(\mu+\nu)+1$ \ $(\lambda-\mu+1\leq i,j\leq \lambda-\nu )$}, \\
0 & \text{otherwise}.
\end{cases}
\end{align*}
This completes the proof.
\end{proof}
Here, let us divide $A(t)$ into two parts $\hat{A}(t)$ and $\tilde{A}(t)$, where
\begin{align*}
\hat{A}(t)&=(\hat{a}_{ij}(t))_{1\leq i, j \leq n}=nM_n(x^n, x^{n-1})-nt\sum_{k=0}^{s}t_{s-k}M_n(x^{n-1}, x^{s-k}) \\
& \hspace{56.5mm} +t\sum_{k=0}^{s-1}(s-k)t_{s-k}M_n(x^n, x^{s-k-1}), \\
\tilde{A}(t)&=(\tilde{a}_{ij}(t))_{1\leq i, j \leq n}=t^2M_n(g(x), g^{\prime}(x))
\end{align*}
and put $l_k=n-s+k+2 \ (=2n-(n+s-k-1)+1)$. Then, by lemma \ref{lem5.1}, we have 
\begin{align*}
\begin{cases}
\hat{a}_{11}(t)=n \\
\hat{a}_{1,l_k-1}(t)=\hat{a}_{l_k-1,1}(t)=(s-k)t_{s-k}t \ (0 \leq k \leq s-1).
\end{cases}
\end{align*}
Moreover, when $i+j=l_k$, we have
\small
\begin{align} \label{eq5.1}
\hat{a}_{ij}(t)&=-ntt_{s-k}+t(s-k)t_{s-k}=-(l_k-2)t_{s-k}t \hspace{2mm} (2 \leq i, j \leq l_k-2, \hspace{0.5mm} 0 \leq k\leq s).
\end{align}
\normalsize
\begin{rem}
Note that, if $s=n-1$, we have
\begin{align*}
-nt\sum_{k=0}^{s}t_{s-k}M_n(x^{n-1}, x^{s-k})=-nt\sum_{k=1}^{s}t_{s-k}M_n(x^{n-1}, x^{s-k}),
\end{align*}
Thus, when $i+j=l_k$, equation \eqref{eq5.1} should be modified by
\begin{align*} 
\hat{a}_{ij}(t)=-ntt_{s-k}+t(s-k)t_{s-k}=-(l_k-2)t_{s-k}t \hspace{3mm} (2 \leq i, j \leq l_k-2, \hspace{0.5mm} 1 \leq k\leq s). 
\end{align*}
We avoid this minor defect by considering that there is no entries satisfying $2 \leq i,j \leq l_0-2$ when $s=n-1$ since $l_0-2=n-s=1$.
\end{rem}
\begin{prop} \label{prop5.4}
Put $l_k=n-s+k+2$. Then 
\begin{align*}
&\hat{a}_{ij}(t)
=
\begin{cases}
n & \text{$(i,j)=(1, 1)$} \\
(s-k)t_{s-k}t & \text{$(i,j)=(1, l_k-1)$ or $(l_k-1, 1)$ \ $(0 \leq k \leq s-1)$} \\
-(l_k-2)t_{s-k}t & \text{$i+j=l_k$, $2 \leq i,j \leq l_k-2$, $(0 \leq k \leq s)$} \\
0 & \text{otherwise}.
\end{cases} \\
&\tilde{a}_{ij}(t)
=
\begin{cases}
b_{i-(n-s), \hspace{0.3mm} j-(n-s)}t^2 & \text{$n-s+1 \leq i, j \leq n$} \\
0 & \text{otherwise}.
\end{cases}
\end{align*}
\end{prop}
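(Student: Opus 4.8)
The plan is to prove Proposition~\ref{prop5.4} by direct computation: feed the three elementary Bezoutians appearing in the expansion of $\hat{A}(t)$ into Lemma~\ref{lem5.1}, assemble the resulting contributions one anti-diagonal at a time, and then treat $\tilde{A}(t)$ by noting that the generating polynomial $B_n(g,g')$ does not really depend on the padding index $n$.

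First I would apply Lemma~\ref{lem5.1} to each summand of $\hat{A}(t)$. With $(\lambda,\mu,\nu)=(n,n,n-1)$ it gives that $M_n(x^n,x^{n-1})$ has a single nonzero entry, equal to $1$, at $(1,1)$, i.e. on the anti-diagonal $i+j=2$. With $(\lambda,\mu,\nu)=(n,n-1,s-k)$ it gives that $M_n(x^{n-1},x^{s-k})$ has entries $1$ exactly on the segment of the anti-diagonal $i+j=2n-(n-1+s-k)+1=n-s+k+2=l_k$ cut out by $2\le i,j\le l_k-2$; the hypothesis $\mu>\nu$ holds for $k\ge 1$ always and for $k=0$ precisely when $n-s\ge 2$, the leftover case $s=n-1,\ k=0$ being the one flagged in the Remark just above, in which the relevant segment is empty anyway. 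Finally, with $(\lambda,\mu,\nu)=(n,n,s-k-1)$ it gives that $M_n(x^n,x^{s-k-1})$ has entries $1$ exactly on the longer segment of the same anti-diagonal $i+j=l_k$ cut out by $1\le i,j\le l_k-1$.

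Next I would add these up. Since $k\mapsto l_k=n-s+k+2$ is strictly increasing, the anti-diagonals involved are pairwise disjoint and none equals $i+j=2$ (because $l_k\ge n-s+2\ge 3$), so the entries of $\hat{A}(t)$ may be read off anti-diagonal by anti-diagonal with no cross-interference. On $i+j=2$ only the first term survives, giving $\hat{a}_{11}(t)=n$. On $i+j=l_k$, the two endpoints $(1,l_k-1)$ and $(l_k-1,1)$ are reached only by the $M_n(x^n,x^{s-k-1})$ term (the $M_n(x^{n-1},x^{s-k})$ term starts at indices $\ge 2$), contributing $(s-k)t_{s-k}t$ for $0\le k\le s-1$; each interior position $2\le i,j\le l_k-2$ is reached by both, contributing $-nt_{s-k}t+(s-k)t_{s-k}t=-(n-s+k)t_{s-k}t=-(l_k-2)t_{s-k}t$ for $0\le k\le s$ (when $k=s$ the third term is absent but $s-k=0$, so nothing changes). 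All remaining entries are $0$. This is exactly the asserted formula for $\hat{a}_{ij}(t)$.

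For $\tilde{A}(t)=t^2M_n(g(x),g'(x))$, the point is that $B_n(g,g')=\dfrac{g(x)g'(y)-g(y)g'(x)}{x-y}$ has $x$- and $y$-degree at most $s-1$, hence equals $B_s(g,g')$; rewriting $x^{s-i}y^{s-j}=x^{n-(n-s+i)}y^{n-(n-s+j)}$ shows that the coefficient matrix $M_n(g,g')$ is the matrix $M_s(g)=M_s(g,g')=(b_{pq})$ placed in the rows and columns indexed $n-s+1,\dots,n$ and zero elsewhere. Multiplying by $t^2$ gives $\tilde{a}_{ij}(t)=b_{i-(n-s),\,j-(n-s)}t^2$ for $n-s+1\le i,j\le n$ and $0$ otherwise, as claimed. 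The whole argument is bookkeeping; the only genuinely delicate points are distinguishing, on each anti-diagonal $i+j=l_k$, the two corner entries from the interior ones, checking the hypothesis $\mu>\nu$ of Lemma~\ref{lem5.1} at each use (and isolating the degenerate case $s=n-1$), and confirming that distinct $k$ give disjoint anti-diagonals. I expect this anti-diagonal overlap analysis to be the main — though mild — obstacle.
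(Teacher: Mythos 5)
Your proof is correct and follows essentially the same route as the paper: the $\hat{a}_{ij}$ formula is obtained by feeding the three elementary Bezoutians into Lemma~\ref{lem5.1} and summing anti-diagonal by anti-diagonal (with the same treatment of the degenerate case $s=n-1$), and the $\tilde{a}_{ij}$ formula by observing that $M_n(g)$ is just $M_s(g)$ shifted into the last $s$ rows and columns. Your phrasing of the $\tilde{A}$ step via the degree bound on $B_n(g,g')$ is a slight repackaging of the paper's term-by-term application of Lemma~\ref{lem5.1}, but it is the same argument.
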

\begin{proof}
The statement for $\hat{a}_{ij}(t)$ has just been proved. For $\tilde{a}_{ij}(t)$, it is enough to see that we can denote
\begin{align*}
&M_s(g(x))=\sum_{\ell=0}^s\sum_{m=1}^s mt_{\ell}t_mM_s(x^{\ell}, x^{m-1}), \\  
&M_n(g(x))=\sum_{\ell=0}^s\sum_{m=1}^s mt_{\ell}t_mM_n(x^{\ell}, x^{m-1}),
\end{align*}
that is, we can obtain $M_n(g(x))$ from $M_s(g(x))$ by just replacing $s$ with $n$ for all $M_s(x^{\ell}, x^m)$, which, by Lemma \ref{lem5.1}, means that 
$s \times s$ matrix  $M_s(g(x))$ occupies the part $\{ b_{ij}^{\dagger} \mid n-s+1 \leq i,j \leq n \}$ of the matrix $M_n(g(x))=(b_{ij}^{\dagger})_{1 \leq i,j \leq n}$.
\end{proof}
By Proposition \ref{prop5.4}, we can express the matrix $A(t)$ as follows;
\footnotesize{
\begin{align} \label{eq5.11}
A(t)
=
\scalebox{0.83}[1]
{$\left[
\begin{array}{cccc|cccc}
\hspace{-1.5mm} n & \hspace{-4mm} 0 & \hspace{-3mm} \dots & \hspace{-2mm} 0 & \hspace{-1mm} st_st & \hspace{-3mm} (s-1)t_{s-1}t & \hspace{-2.5mm} \dots & \hspace{-1.5mm} t_1t \\
\hspace{-1.5mm}0 &&& \hspace{-2mm} -(n-s)t_st \hspace{-0.5mm} & \hspace{-0.5mm} -(n-s+1)t_{s-1}t & \hspace{-3mm} \dots & \hspace{-2.5mm} -(n-1)t_1t & \hspace{-1.5mm} -nt_0t \\
\hspace{-1.5mm}\vdots && \hspace{-3mm} \rotatebox[origin=c]{295}{\vdots} & \hspace{-2mm} \rotatebox[origin=c]{295}{\vdots} && \hspace{-3mm} \rotatebox[origin=c]{295}{\vdots} & \hspace{-2.5mm} \rotatebox[origin=c]{295}{\vdots} & \hspace{-1.5mm} 0  \\
\hspace{-1.5mm}0 & \hspace{-4mm} -(n-s)t_st & \hspace{-3mm} \rotatebox[origin=c]{295}{\vdots} &&& \hspace{-3mm} \rotatebox[origin=c]{295}{\vdots} & \hspace{-2.5mm} 0 & \hspace{-1.5mm} 0 \\ \cline{1-8}
\hspace{-1.5mm}st_st & \hspace{-3mm} -(n-s+1)t_{s-1}t &&&&&& \\
\hspace{-1.5mm}(s-1)t_{s-1}t & \hspace{-4mm} \vdots & \hspace{-3mm} \rotatebox[origin=c]{295}{\vdots} & \hspace{-2mm} \rotatebox[origin=c]{295}{\vdots} & \multicolumn{4}{c}{\raisebox{-10pt}[0pt][0pt]{$C(t)$}} \\
\vdots & \hspace{-4mm} -(n-1)t_1t & \hspace{-3mm} \rotatebox[origin=c]{295}{\vdots} & \hspace{-2mm} 0 &&&& \\  
\hspace{-1.5mm}t_1t & \hspace{-4mm} -nt_0t & \hspace{-3mm} 0 & \hspace{-2mm} 0 &&&& \\
\end{array}
\hspace{-0.5mm}\right]
$}.  
\end{align}
}
\normalsize
Here, $C(t)=(c_{ij}(t))_{1\leq i,j \leq s}=C(t_0, \cdots, t_s,t)=(c_{ij}(t_0, \cdots, t_s,t))_{1 \leq i,j \leq s}$ is an $s \times s$ symmetric matrix whose entries are of the form 
\begin{align*} 
c_{ij}(t_0, \cdots, t_s,t)&=b_{ij}t^2+\lambda_{ij}t \\
&=b_{ij}(t_0, \cdots, t_s)t^2+\lambda_{ij}(t_0, \cdots, t_s)t \hspace{3mm} (\lambda_{ij}=\lambda_{ij}(t_0, \cdots, t_s) \in E_1). 
\end{align*}
Next, let 
$A(t)_1=(a_{ij}(t)_1)_{1\leq i,j \leq n}=A(t_0, \cdots, t_s, t)_1=(a_{ij}(t_0, \cdots, t_s,t)_1)_{1\leq i,j \leq n}$
be the $n \times n$ symmetric matrix obtained from $A(t)$ by multiplying the first row and the first column by $1/\sqrt{n}$ and then sweeping out the entries of the first row and the first column by the $(1,1)$ entry $1$. 
Here, let $Q_{m}(k;c)=(q _{ij})_{1\leq i,j\leq m}$ and $R_{m}(k,l;c)=(r _{ij})_{1\leq i,j\leq m}$ be $m\times m$ elementary matrices such that
\begin{center}
\noindent $Q_m(k;c)$=
\scalebox{0.85}[1]
{$\left[
\begin{array}{ccccccc} 
1 & & & & & & \\ 
& \ddots & & & & & \\ 
& & 1 & & & & \\ 
& & & c & & & \\ 
& & & & 1 & & \\ 
& & & & & \ddots & \\ 
& & & & & & 1
\end{array}
\right]
$},
$R_m(k,l;c)$=
\scalebox{0.85}[1]
{$\left[
\begin{array}{cccccccc} 
1 & & & & & & & \\ 
& \ddots & & & & & & \\ 
& & 1 & & & c & \\ 
& & & \ddots & & & & \\ 
& & & & & 1 & & \\ 
& & & & & & \ddots & \\ 
& & & & & & & 1
\end{array}
\right],
$}
\end{center}
where $q _{kk}=c$ and $r _{kl}=c$. Moreover, for any $m \times m$ matrices $M_1$, $M_2$, $\cdots$, $M_l$, put
$\prod_{k=1}^{l} M_k=M_1M_2\cdots M_l$. Then, we have $A(t)_1={}^tS(t)_1A(t)S(t)_1$, where 
\begin{align*}
S(t)_1=\displaystyle Q_{n}(1;1/\sqrt{n})\prod_{k=0}^{s-1}R_n(1,l_k-1;-a_{1, l_k-1}(t)/\sqrt{n}). 
\end{align*}
The matrix $A(t)_1$ can be expressed as follows;
\footnotesize{
\begin{align}\label{eq5.2}
A(t)_1=
\scalebox{0.99}[1]
{$\left[
\begin{array}{cccc|cccc}
\hspace{-1.5mm} 1 & \hspace{-4mm} 0 & \hspace{-3mm} \dots & \hspace{-2mm} 0 & \hspace{-1mm} 0 & \hspace{-3mm} 0 & \hspace{-2.5mm} \dots & \hspace{-1.5mm} 0 \\
\hspace{-1.5mm}0 & \hspace{-4mm} 0 & \hspace{-3mm} \dots & \hspace{-2mm} -(n-s)t_st \hspace{-0.5mm} & \hspace{-0.5mm} -(n-s+1)t_{s-1}t & \hspace{-3mm} \dots & \hspace{-2.5mm} -(n-1)t_1t & \hspace{-1.5mm} -nt_0t \\
\hspace{-1.5mm}\vdots & \hspace{-4mm} \vdots & \hspace{-3mm} \rotatebox[origin=c]{295}{\vdots} & \hspace{-2mm} \rotatebox[origin=c]{295}{\vdots} && \hspace{-3mm} \rotatebox[origin=c]{295}{\vdots} & \hspace{-2.5mm} \rotatebox[origin=c]{295}{\vdots} & \hspace{-1.5mm} 0  \\
\hspace{-1.5mm}0 & \hspace{-4mm} -(n-s)t_st & \hspace{-3mm} \rotatebox[origin=c]{295}{\vdots} &&& \hspace{-3mm} \rotatebox[origin=c]{295}{\vdots} & \hspace{-2.5mm} 0 & \hspace{-1.5mm} 0 \\ \cline{1-8}
\hspace{-1.5mm}0 & \hspace{-3mm} -(n-s+1)t_{s-1}t &&&&&& \\
\hspace{-1.5mm}0 & \hspace{-4mm} \vdots & \hspace{-3mm} \rotatebox[origin=c]{295}{\vdots} & \hspace{-2mm} \rotatebox[origin=c]{295}{\vdots} & \multicolumn{4}{c}{\raisebox{-10pt}[0pt][0pt]{$C(t)_1$}} \\
\vdots & \hspace{-4mm} -(n-1)t_1t & \hspace{-3mm} \rotatebox[origin=c]{295}{\vdots} & \hspace{-2mm} 0 &&&& \\  
\hspace{-1.5mm}0 & \hspace{-4mm} -nt_0t & \hspace{-3mm} 0 & \hspace{-2mm} 0 &&&& \\
\end{array}
\hspace{-0.5mm}\right]
$}.
\end{align} 
\normalsize
Here, 
$C(t)_1=(c_{ij}(t)_1)_{1\leq i,j \leq s}=C(t_0, \cdots, t_s,t)_1=(c_{ij}(t_0, \cdots, t_s,t)_1)_{1\leq i,j \leq s}$
is an $s \times s$ symmetric matrix whose entries are of the form 
\begin{align*}
c_{ij}(t_0, \cdots, t_s,t)_1=\bar{b}_{ij}(t_0, \cdots, t_s)t^2+\lambda_{ij}(t_0, \cdots, t_s)t \hspace{3mm} (\bar{b}_{ij}(t_0, \cdots, t_s) \in E_1), 
\end{align*}
where
\begin{align} \label{eq5.22}
\bar{b}_{ij}(t_0, \cdots, t_s)
&=b_{ij}(t_0, \cdots, t_s)-\dfrac{(s-i+1)(s-j+1)}{n}t_{s-i+1}t_{s-j+1} 
\end{align}
for any $i, j$ ($1 \leq i,j \leq s$). We put $\bar{b}_{ij}(t_0, \cdots, t_s)=\bar{b}_{ij}$ and $\bar{B}=(\bar{b}_{ij})_{1 \leq i,j \leq s}$.

\subsection{Some results for the Bezoutian of $f_{\text{\boldmath $r$}}(t ; x)$}

Let ${\bm r}=(r_0, \cdots, r_s) \in \R^{s+1}$ be a vector as in Theorem \ref{thm5.2}. We put
\begin{align*}
&A_{\bm r}(t)=(a_{ij}^{({\bm r})}(t))_{1\leq i, j \leq n}=A(r_0, \cdots, r_s, t) \in \mathrm{Sym}_n(\R(t)), \\
&B_{\bm r}=(b_{ij}^{({\bm r})})_{1\leq i,j \leq s}=B(r_0, \cdots, r_s) \in \mathrm{Sym}_s(\R)
\end{align*}
and $B_{\bm r}(t)=t^2B_{\bm r}$. 
Let us also put $A_{\bm r}(t)_1=A(r_0, \cdots, r_s, t)_1$. By equation \eqref{eq5.2}, the matrix $A_{\bm r}(t)_1$ can be expressed as follows;
\footnotesize{
\begin{align*}
A_{\bm r}(t)_1=
\scalebox{0.98}[1]
{$\left[
\begin{array}{cccc|cccc}
\hspace{-1.5mm} 1 & \hspace{-4mm} 0 & \hspace{-3mm} \dots & \hspace{-2mm} 0 & \hspace{-1mm} 0 & \hspace{-3mm} 0 & \hspace{-2.5mm} \dots & \hspace{-1.5mm} 0 \\
\hspace{-1.5mm}0 & \hspace{-4mm} 0 & \hspace{-3mm} \dots & \hspace{-2mm} -(n-s)r_st \hspace{-0.5mm} & \hspace{-0.5mm} -(n-s+1)r_{s-1}t & \hspace{-3mm} \dots & \hspace{-2.5mm} -(n-1)r_1t & \hspace{-1.5mm} -nr_0t \\
\hspace{-1.5mm}\vdots & \hspace{-4mm} \vdots & \hspace{-3mm} \rotatebox[origin=c]{295}{\vdots} & \hspace{-2mm} \rotatebox[origin=c]{295}{\vdots} && \hspace{-3mm} \rotatebox[origin=c]{295}{\vdots} & \hspace{-2.5mm} \rotatebox[origin=c]{295}{\vdots} & \hspace{-1.5mm} 0  \\
\hspace{-1.5mm}0 & \hspace{-4mm} -(n-s)r_st & \hspace{-3mm} \rotatebox[origin=c]{295}{\vdots} &&& \hspace{-3mm} \rotatebox[origin=c]{295}{\vdots} & \hspace{-2.5mm} 0 & \hspace{-1.5mm} 0 \\ \cline{1-8}
\hspace{-1.5mm}0 & \hspace{-3mm} -(n-s+1)r_{s-1}t &&&&&& \\
\hspace{-1.5mm}0 & \hspace{-4mm} \vdots & \hspace{-3mm} \rotatebox[origin=c]{295}{\vdots} & \hspace{-2mm} \rotatebox[origin=c]{295}{\vdots} & \multicolumn{4}{c}{\raisebox{-10pt}[0pt][0pt]{$C_{\bm r}(t)_1$}} \\
\vdots & \hspace{-4mm} -(n-1)r_1t & \hspace{-3mm} \rotatebox[origin=c]{295}{\vdots} & \hspace{-2mm} 0 &&&& \\  
\hspace{-1.5mm}0 & \hspace{-4mm} -nr_0t & \hspace{-3mm} 0 & \hspace{-2mm} 0 &&&& \\
\end{array}
\hspace{-0.5mm}\right]
$}.
\end{align*} 
\normalsize
Here, 
$C_{\bm r}(t)_1=(c_{ij}^{(\bm r)}(t)_1)_{1\leq i,j \leq s}=C(r_0, \cdots, r_s,t)_1$ and
\begin{align*}
c_{ij}^{(\bm r)}(t)_1=\bar{b}_{ij}(r_0, \cdots, r_s)t^2+\lambda_{ij}(r_0, \cdots, r_s)t \hspace{3mm} (\bar{b}_{ij}(r_0, \cdots, r_s), \lambda_{ij}(r_0, \cdots, r_s) \in \R). 
\end{align*}
Note that, by equation \eqref{eq5.22}, we have
\begin{align*} 
\bar{b}_{ij}(r_0, \cdots, r_s)
&=b_{ij}^{({\bm r})}-\dfrac{(s-i+1)(s-j+1)}{n}r_{s-i+1}r_{s-j+1} \hspace{3mm} (1 \leq i,j \leq s).
\end{align*}
To ease notation, we put $\bar{b}_{ij}(r_0, \cdots, r_s)=\bar{b}_{ij}^{({\bm r})}$ and $\bar{B}_{\bm r}=(\bar{b}_{ij}^{(\bm r)})_{1 \leq i,j \leq s}$.

In particular, since
\begin{align*}
M_s(g_{\bm r})
&=M_s\left(r_sx^s, \sum_{k=0}^{s-1}(s-k)r_{s-k}x^{s-k-1}\right)+M_s\left(\sum_{k=1}^{s}r_{s-k}x^{s-k}, g_{\bm r}^{\prime}\right) \\
&=\sum_{k=0}^{s-1}(s-k)r_sr_{s-k}M_s(x^s,x^{s-k-1})+M_s\left(\sum_{k=1}^{s}r_{s-k}x^{s-k}, g_{\bm r}^{\prime}\right),
\end{align*}
we have
\begin{align} \label{eq5.3}
b_{1, k+1}^{(\bm r)}=b_{k+1, 1}^{(\bm r)}=(s-k)r_sr_{s-k} \ (0 \leq k \leq s-1)
\end{align}
by Lemma \ref{lem5.1} and hence
\begin{align} \label{eq5.4}
\bar{b}_{1j}^{({\bm r})}&=(s-j+1)r_sr_{s-j+1}-\dfrac{s(s-j+1)}{n}r_{s}r_{s-j+1} \\ \notag 
&=(s-j+1)\left(1-\frac{s}{n}\right)r_sr_{s-j+1} \ (1\leq j \leq s).
\end{align}
\begin{lem}\label{lem5.2}
Put $\bar{B}_{\bm r}(t)=t^2\bar{B}_{\bm r}$. Then, $B_{\bm r}(\xi)$ and $\bar{B}_{\bm r}(\xi)$ are equivalent over $\R$ for any real number $\xi$ and we have $\sigma(\bar{B}_{\bm r}(\xi))=N_{g_{\bm r}}$ for any non-zero real number $\xi$.
\end{lem}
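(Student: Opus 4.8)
The plan is to establish the two claims separately, with the equivalence of $B_{\bm r}(\xi)$ and $\bar B_{\bm r}(\xi)$ coming first and then feeding into the signature computation. For the equivalence, recall from \eqref{eq5.22} that $\bar B_{\bm r}$ differs from $B_{\bm r}$ only by the rank-one symmetric perturbation $-\tfrac1n\,{\bm w}\,{}^t{\bm w}$, where ${\bm w}=(s\,r_s,\,(s-1)r_{s-1},\,\dots,\,r_1)^t$ is, up to the scalars $(s-i+1)$, the vector of lower-order coefficients of $g_{\bm r}$; indeed $\bar b_{ij}^{(\bm r)}=b_{ij}^{(\bm r)}-\tfrac1n w_i w_j$. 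But this is exactly the effect of the congruence $S(t)_1$ applied in the passage from $A_{\bm r}(t)$ to $A_{\bm r}(t)_1$, restricted to the lower-right $s\times s$ block: sweeping out the first row/column of $A_{\bm r}(t)$ by the $(1,1)$-entry $n$ subtracts a rank-one piece from the complementary block, and the $t^2$-part of that piece is precisely $\tfrac1n {\bm w}{}^t{\bm w}$ by \eqref{eq5.3}. So I would argue: specializing $t\mapsto\xi$, the congruence realizing $\bar b_{ij}^{(\bm r)}=b_{ij}^{(\bm r)}-\tfrac1n w_i w_j$ on the quadratic form $\sum b_{ij}^{(\bm r)}x_ix_j$ is completion of the square in a fresh variable, but since the $(1,1)$ block is a genuine scalar $n>0$ the two $s\times s$ forms $B_{\bm r}$ and $\bar B_{\bm r}$ are directly congruent over $\R$ (the matrix of the congruence is ${}^tP\,\cdot\,P$ with $P=I_s-\tfrac1n\,\text{(something)}$, an explicit unitriangular-type factor coming from the $R_n(1,l_k-1;\,\cdot\,)$ factors of $S(t)_1$). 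Multiplying by the positive scalar $\xi^2$ preserves congruence, so $B_{\bm r}(\xi)=\xi^2 B_{\bm r}$ and $\bar B_{\bm r}(\xi)=\xi^2\bar B_{\bm r}$ are equivalent over $\R$.

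For the second claim, I would use Proposition \ref{prop5.2} together with the hypothesis on ${\bm r}$. Since $g_{\bm r}(x)$ is separable of degree $s$, its Bezoutian matrix $M_s(g_{\bm r})=B_{\bm r}$ is nonsingular, and by Proposition \ref{prop5.2} its index of inertia equals $N_{g_{\bm r}}$, i.e. $\sigma(B_{\bm r})=N_{g_{\bm r}}$. For any nonzero real $\xi$, the scalar $\xi^2>0$, so $\sigma(B_{\bm r}(\xi))=\sigma(\xi^2 B_{\bm r})=\sigma(B_{\bm r})=N_{g_{\bm r}}$. Combining with the equivalence established in the first paragraph, $\sigma(\bar B_{\bm r}(\xi))=\sigma(B_{\bm r}(\xi))=N_{g_{\bm r}}$, which is the assertion. (Note the first claim, equivalence of $B_{\bm r}(\xi)$ and $\bar B_{\bm r}(\xi)$, is stated for all real $\xi$ including $\xi=0$, where both sides are the zero form and the statement is trivial; the signature statement genuinely needs $\xi\neq0$.)

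The main obstacle is the bookkeeping in the first paragraph: making precise that the rank-one correction $-\tfrac1n {\bm w}\,{}^t{\bm w}$ in \eqref{eq5.22} is realized by an honest $\R$-congruence of the $s\times s$ forms, rather than merely a Witt-equivalence or an equivalence after adding a hyperbolic plane. The cleanest way is to write down the explicit congruence matrix: from $S(t)_1=Q_n(1;1/\sqrt n)\prod_{k=0}^{s-1}R_n(1,l_k-1;-a_{1,l_k-1}(t)/\sqrt n)$ one reads off that its action on the last $s$ coordinates is trivial (the $R_n$ factors only add multiples of the first coordinate to the $(l_k-1)$-th, and conversely), so the lower-right block of ${}^tS(t)_1 A_{\bm r}(t) S(t)_1$ is obtained from $C_{\bm r}(t)=B_{\bm r}(t)+(\text{linear in }t)$ by subtracting $\tfrac1n$ times the outer product of the first-row entries $a_{1,l_k-1}(t)=(s-k)r_{s-k}t$. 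Matching $t^2$-coefficients gives $\bar B_{\bm r}=B_{\bm r}-\tfrac1n{\bm w}{}^t{\bm w}$ exactly, and since this whole manipulation is a congruence by a matrix defined over $\R(t)$ that specializes well at $t=\xi$, evaluating at $\xi$ shows $B_{\bm r}(\xi)$ (the $t^2$-part of $C_{\bm r}(t)$ at $t=\xi$, which is $\xi^2 B_{\bm r}$ since the linear part is the $\lambda_{ij}$ term — here one must be slightly careful that we are comparing the \emph{pure} quadratic parts, which is legitimate because $B_{\bm r}(\xi)$ and $\bar B_{\bm r}(\xi)$ are defined as $\xi^2 B_{\bm r}$ and $\xi^2\bar B_{\bm r}$ directly, not via $C_{\bm r}$) is congruent to $\bar B_{\bm r}(\xi)$ over $\R$.
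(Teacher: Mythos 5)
Your second half (getting $\sigma(\bar B_{\bm r}(\xi))=N_{g_{\bm r}}$ from the equivalence via Proposition~\ref{prop5.2} and the fact that scaling by $\xi^2>0$ preserves the index of inertia) is fine and is exactly what the paper does. The gap is in the first claim, and you correctly flag it as ``the main obstacle,'' but your proposed resolution does not close it. You argue: $S(t)_1$ acts trivially on the last $s$ coordinates, the lower-right block of ${}^tS(t)_1A_{\bm r}(t)S(t)_1$ is obtained from that of $A_{\bm r}(t)$ by subtracting $\tfrac1n{\bm w}\,{}^t{\bm w}\,t^2$, ``and since this whole manipulation is a congruence\dots\ $B_{\bm r}(\xi)$ is congruent to $\bar B_{\bm r}(\xi)$.'' That last inference is invalid: the congruence by $S(t)_1$ is a congruence of the full $n\times n$ matrices and does \emph{not} restrict to a congruence of the $s\times s$ corner blocks --- its effect there is purely additive. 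In general a symmetric matrix $B$ and a rank-one perturbation $B-\tfrac1n{\bm w}\,{}^t{\bm w}$ are not congruent over $\R$ (such a perturbation can change the signature), so a specific structural input is required, and your sketch never supplies one; the congruence matrix $P=I_s-\tfrac1n(\text{something})$ you allude to cannot come from the $R_n(1,l_k-1;\cdot)$ factors, since, as you yourself observe, those act trivially on the last $s$ coordinates.

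The missing input, which is what the paper actually uses, is that the subtracted vector ${\bm w}=(sr_s,(s-1)r_{s-1},\dots,r_1)$ is proportional to the first row of $B_{\bm r}$ itself: by \eqref{eq5.3} the first row of $B_{\bm r}$ is $r_s{\bm w}$, and by \eqref{eq5.4} the first row of $\bar B_{\bm r}$ is $(1-s/n)r_s{\bm w}$. In particular $b_{11}^{(\bm r)}=sr_s^2>0$ and $\bar b_{11}^{(\bm r)}=s(1-s/n)r_s^2>0$ (here $n>s$ is essential), so one may sweep out the first row and column of each of $B_{\bm r}$ and $\bar B_{\bm r}$ separately --- an honest $\R$-congruence of each $s\times s$ matrix --- and the two resulting Schur complements coincide, $B_{\bm r}^*=\bar B_{\bm r}^*$; this computation is Lemma~3.3 of \cite{otake}, which the paper cites rather than reproves. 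That identity, not the block bookkeeping of $S(t)_1$, is what yields the equivalence of $B_{\bm r}(\xi)$ and $\bar B_{\bm r}(\xi)$ over $\R$. To repair your write-up you would need to either reproduce that Schur-complement computation (using the proportionality above) or exhibit an explicit invertible $P$ with ${}^tP B_{\bm r}P=\bar B_{\bm r}$, e.g.\ of the form $P=I_s-\mu\,e_1{}^te_1B_{\bm r}$ with $\mu$ solving $2\mu-\mu^2 b_{11}^{(\bm r)}=1/(nr_s^2)$, which has a real root precisely because $n>s$.
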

\begin{proof}
Let us denote by $B_{\bm r}^{*}=(b_{ij}^{({\bm r}, *)})_{1\leq i,j \leq s}$ (${\bar{B}_{\bm r}}^{*}=(\bar{b}_{ij}^{({\bm r}, *)})_{1 \leq i,j \leq s}$) the matrix obtained from $B_{\bm r}$ ($\bar{B}_{\bm r}$) by multiplying the first row and the first column by $1\Bigl/\pm\sqrt{b_{11}^{(\bm r)}}$ $\left(1\Bigl/\pm\sqrt{\bar{b}_{11}^{(\bm r)}}\right)$ 
(the sign before $\sqrt{b_{11}^{(\bm r)}}$ $\left( \sqrt{\bar{b}_{11}^{(\bm r)}} \right)$ are the same as the sign of $r_s$; see the definition of $d$ $\left( \bar{d} \hspace{0.7mm} \right)$ below) 
and then sweeping out the entries of the first row and the first column by the $(1,1)$ entry $1$. Since $b_{11}=sr_s^2$ $(>0)$ and $\bar{b}_{11}=s(1-s/n)r_s^2$ $(>0)$ by \eqref{eq5.3} and \eqref{eq5.4}, we have
\begin{align} \label{eq5.41}
B_{\bm r}^{*}={}^tTB_{\bm r}T, \ \bar{B}_{\bm r}^{*}={}^t\bar{T}\bar{B}_{\bm r}\bar{T},
\end{align}
where
\begin{align*}
T&=Q_s(1;1/d)\prod_{k=2}^{s}R_s(1,k;-b_{1k}^{(\bm r)}/d) \ (d=\sqrt{s}\cdot r_s), \\
\bar{T}&=Q_s(1;1/\bar{d})\prod_{k=2}^{s}R_s(1,k;-\bar{b}_{1k}^{(\bm r)}/\bar{d}) \ (\bar{d}=\sqrt{s(1-s/n)} \cdot r_s).
\end{align*}
Note that in \cite[Lemma 3.3]{otake}, we have proved $b_{ij}^{(\bm r, *)}=\bar{b}_{ij}^{(\bm r, *)}$ $(1 \leq i,j \leq s)$ and hence $t^2B_{\bm r}^{*}=t^2\bar{B}_{\bm r}^{*}$, which, by \eqref{eq5.41}, implies that symmetric matrices $B_{\bm r}(\xi)$ and $\bar{B}_{\bm r}(\xi)$ are equivalent over $\R$ for any real number $\xi$. Then, since $N_{g_{\bm r}}=\sigma(B_{\bm r})=\sigma(B_{\bm r}(\xi))$ for any $\xi \in \R\setminus\{0\}$, the latter half of the statement have also been proved.
\end{proof}

\subsection{Nonvanishingness of some coefficients}
In this subsection, we prove the next lemma.
\begin{lem}\label{lem5.3}
Let 
\begin{align}\label{eq5.54}
\varPhi(x)=\varPhi(t_0, \cdots, t_s ; x)=\sum_{k=0}^{s}h_{s-k}(t_0, \cdots, t_s)x^{s-k} \in E_1[x] 
\end{align}
be the characteristic polynomial of $\bar{B}$. Then, $h_{s-k}(t_0, \cdots, t_s)$ is a non-zero polynomial in $E_1$ for any $k$ $(1 \leq k \leq s)$. 
\end{lem}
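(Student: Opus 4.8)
\medskip

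The plan is to work directly with the entries of $\bar{B}=(\bar{b}_{ij})_{1\le i,j\le s}$ displayed in \eqref{eq5.22}, and to exploit the first row/column formula \eqref{eq5.4}, which shows $\bar{b}_{1j}=(s-j+1)(1-s/n)\,t_s t_{s-j+1}$. The coefficient $h_{s-k}(t_0,\dots,t_s)$ of $x^{s-k}$ in $\varPhi(x)=\det(xI_s-\bar{B})$ is, up to sign, the sum of the $k\times k$ principal minors of $\bar{B}$; to prove $h_{s-k}\ne 0$ it suffices to exhibit \emph{one} monomial in $t_0,\dots,t_s$ appearing in that sum that cannot be cancelled by any other term. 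First I would isolate the principal minor of $\bar{B}$ on the index set $\{1,2,\dots,k\}$ (the ``top-left'' $k\times k$ block) and analyze its determinant as a polynomial in $E_1$; the idea is that the presence of the factor $t_s$ throughout the first row and first column (via \eqref{eq5.4}), combined with the structure inherited from $M_s(g)$, forces a distinguished highest- or lowest-weight monomial — for instance, tracking the total degree in the variable $t_s$ alone, the first row and column each contribute exactly one power of $t_s$, so the $\{1,\dots,k\}$ minor has a well-defined $t_s$-adic behaviour that the other principal minors (which also involve row/column $1$ or not) can be compared against.

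\medskip

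More concretely, the cleaner route is to use the substitution/specialization trick already present in the companion paper: pick a convenient numerical vector ${\bm r}$ and show $h_{s-k}({\bm r})\ne 0$, since a polynomial that is nonzero at one point is a nonzero polynomial. By Lemma \ref{lem5.2}, $\bar{B}_{\bm r}$ is congruent over $\R$ to $B_{\bm r}=M_s(g_{\bm r})$, whose determinant is (up to the leading coefficient $r_s$) the discriminant $\D_{g_{\bm r}}$ by the Remark following Proposition \ref{prop5.1}. So I would choose ${\bm r}$ so that $g_{\bm r}(x)$ is a separable polynomial of degree $s$ with $s$ \emph{distinct real} roots that are, moreover, generic enough that every elementary symmetric-type combination arising among subsets of roots is nonzero — e.g. $g_{\bm r}(x)=\prod_{i=1}^s (x-c_i)$ for algebraically independent-looking reals $c_i$, or even better, take $c_i$ to be powers of a transcendental and then re-specialize. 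Under such a choice, $\bar{B}_{\bm r}$ is a real symmetric matrix of full rank $s$ with all eigenvalues nonzero, and I claim one can arrange all $k\times k$ principal-minor sums to be nonzero simultaneously: the sum of $k\times k$ principal minors is $e_k$ of the eigenvalues $\mu_1,\dots,\mu_s$ of $\bar{B}_{\bm r}$, so I just need a single admissible ${\bm r}$ for which no elementary symmetric function $e_k(\mu_1,\dots,\mu_s)$ vanishes, $1\le k\le s$; a dimension/genericity argument shows such ${\bm r}$ exists within the (Zariski-dense) set of admissible vectors, because each condition $e_k=0$ cuts out a proper subvariety unless $e_k$ is \emph{identically} zero on the admissible locus — and $e_s=\det\bar B=\pm\D_{g_{\bm r}}/r_s\not\equiv 0$, while for $k<s$ one excludes identical vanishing by the explicit first-row computation \eqref{eq5.4} together with an induction on $s$.

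\medskip

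The main obstacle, and the step I expect to need the most care, is ruling out that some $h_{s-k}$ vanishes \emph{identically} as a polynomial in $E_1$ — genericity only helps once we know the polynomial is not the zero polynomial. I would handle this by an explicit leading-term argument: order the monomials in $t_0,\dots,t_s$ by a weight that makes the contribution of the first row and column of $\bar{B}$ transparent (assign $t_s$ weight $0$ and $t_i$ weight $s-i$, say, so that $\bar b_{ij}$ is weighted-homogeneous of weight $(s-i+1)+(s-j+1)$ on its ``$t_{s-i+1}t_{s-j+1}$'' part and the $b_{ij}$-part has matching weight by Lemma \ref{lem5.1}); then the $k\times k$ principal minor on $\{1,\dots,k\}$ has a unique top-weight part coming from the anti-diagonal-type term of $M_s(g)$, and I would check this top-weight monomial survives in the full symmetric-function sum $h_{s-k}$ because no other principal minor reaches that weight. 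This is essentially the same bookkeeping that produced \eqref{eq5.1}–\eqref{eq5.4}, pushed one level further into the minors; once the nonvanishing of the leading weighted-homogeneous component is established, $h_{s-k}\not\equiv 0$ follows, and the lemma is proved.
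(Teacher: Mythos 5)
Your overall strategy---specialize enough of the variables that $\bar B$ becomes explicit, then exhibit a surviving leading term of each $h_{s-k}$---is the same one the paper follows: it sets $u_0=u_s=1$, $u_1=t_1$, $u_k=0$ otherwise, so that $g_{\bm u}(x)=x^s+t_1x+1$, and proves (Lemma~\ref{lem5.4}) that each $h_{s-k}(u_0,\cdots,u_s)$ is a non-constant polynomial in the one remaining variable $t_1$ by computing its leading coefficient. But your proposal has a genuine gap at exactly the step you yourself flag as the delicate one. The eigenvalue/genericity route in your middle paragraph is circular: to know that $e_k(\mu_1,\cdots,\mu_s)=\pm h_{s-k}$ cuts out a proper subvariety you must already know $h_{s-k}\not\equiv 0$, which is the lemma; you concede this, but the promised "explicit first-row computation together with an induction on $s$" is never supplied, and \eqref{eq5.4} alone only controls the first row and column, not the $k\times k$ minors.

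The leading-term argument in your last paragraph does not close either. Its key assertion---that the principal minor on $\{1,\dots,k\}$ has a unique top-weight monomial and that "no other principal minor reaches that weight"---is not true. With your proposed weights $w(t_i)=s-i$, every monomial of $\bar b_{ij}$ has the same weight $i+j-2$ (by Lemma~\ref{lem5.1} all terms $t_\ell t_m$ of $b_{ij}$ satisfy $\ell+m=2s-i-j+2$, and the correction term in \eqref{eq5.22} matches), so each entire principal minor is weighted-homogeneous; the weighting therefore isolates a single $k\times k$ principal minor of $\bar B$ whose non-vanishing as a polynomial is still the whole problem, not a single monomial. And after specialization the cancellation you wave away is exactly where the work lies: in the paper's computation for $s$ even and $2\le k\le s-2$, \emph{four} distinct families of entry-selections attain the maximal degree $k$ in $t_1$, their sum is the binomial-coefficient combination \eqref{eq5.61}, and showing it is nonzero requires ruling out \eqref{eq5.7} and \eqref{eq5.8} by an argument that forces $n<0$. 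Without confronting this possible cancellation among competing top-degree contributions, the proof is incomplete.
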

\begin{proof}
Lemma \ref{lem5.3} is clear for $s=1$, since we have
\begin{align*}
B=M_1(t_1x+t_0)
=
\left[
\begin{array}{c}
t_1^2
\end{array}
\right]
\end{align*} 
and hence, by equation \eqref{eq5.22},
\begin{align*}
\bar{B}=
\left[
\begin{array}{c}
t_1^2-\dfrac{1}{n}t_1^2
\end{array}
\right]
=
\left[
\begin{array}{c}
\dfrac{n-1}{n}t_1^2
\end{array}
\right].
\end{align*}
Next, suppose $s \geq 2$. Then, by equation \eqref{eq5.22} and the definition of the Bezoutian, we have $h_{s-k}(t_0, \cdots, t_s) \in \R[t_0, \cdots, t_s]$ for any $k$ $(1 \leq k \leq s)$. Thus, we have only to prove that $h_{s-k}(t_0, \cdots, t_s) \neq 0$ for any $k$ $(1 \leq k \leq s)$, which is clear from the next Lemma \ref{lem5.4}.
\end{proof}
\begin{lem}\label{lem5.4}
Suppose $s\geq2$ and put $u_0=u_s=1$, $u_1=t_1$ and $u_k=0$ $(2 \leq k \leq s-1)$. Then, $h_{s-k}(u_0, \cdots, u_s)$ is a non-constant polynomial in $\R(t_1)$ for any $k$ $(1 \leq k \leq s)$, i.e., $h_{s-k}(u_0, \cdots, u_s) \in \R[t_1] \setminus \R$ $(1 \leq k \leq s)$.  
\end{lem}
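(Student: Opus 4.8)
The plan is to write the matrix $\bar B$ explicitly under the given specialization, observe that it is conjugate to a tridiagonal matrix, and then control the coefficients of its characteristic polynomial $\varPhi$ via the classical three-term recursion. \textbf{Step 1.} With ${\bm u}=(1,t_1,0,\dots,0,1)$ we have $g_{\bm u}(x)=x^s+t_1x+1$, and Proposition~\ref{prop5.4} together with \eqref{eq5.22} show that $\bar B=\bar B({\bm u})$ is the $s\times s$ symmetric matrix all of whose entries vanish except $\bar b_{11}=\tfrac{s(n-s)}{n}$, $\bar b_{1s}=\bar b_{s1}=\tfrac{n-s}{n}t_1$, $\bar b_{ij}=(1-s)t_1$ for $i+j=s+1$ with $2\le i,j\le s-1$, $\bar b_{ij}=-s$ for $i+j=s+2$, and $\bar b_{ss}=\tfrac{n-1}{n}t_1^2$ (for $s=2$ the last two rules overlap and give $\bar b_{22}=\tfrac{n-1}{n}t_1^2-s$). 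Since $n>s\ge2$, each of the scalars $\tfrac{s(n-s)}{n}$, $\tfrac{n-s}{n}$, $1-s$, $-s$, $\tfrac{n-1}{n}$ is nonzero, which we use throughout; the case $s=2$ is a one-line check, so assume $s\ge3$ from now on.

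\textbf{Step 2.} The off-diagonal support of $\bar B$ consists of exactly $s-1$ pairs, which form the edges of a Hamiltonian path $1-s-2-(s-1)-3-(s-2)-\cdots$ ending at the unique ``central'' index ($\tfrac{s+1}{2}$ if $s$ is odd, $\tfrac{s}{2}+1$ if $s$ is even), which is also the only index besides $1$ and $s$ with a nonzero diagonal entry. Conjugating $\bar B$ by the permutation matrix that lists the indices in this path order yields a tridiagonal matrix $T$ with diagonal $(a,\eta t_1^2,0,\dots,0,\ast)$ and off-diagonal $(\beta t_1,-s,(1-s)t_1,-s,(1-s)t_1,\dots)$, where $a=\tfrac{s(n-s)}{n}$, $\eta=\tfrac{n-1}{n}$, $\beta=\tfrac{n-s}{n}$, and $\ast$ is $(1-s)t_1$ or $-s$ according to the parity of $s$. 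As the characteristic polynomial is conjugation-invariant, $\varPhi(x)=\det(xI-T)=\varphi_s(x)$, where the leading principal minors $\varphi_\ell(x)=\det(xI_\ell-T_\ell)$ obey $\varphi_\ell=(x-d_\ell)\varphi_{\ell-1}-e_{\ell-1}^{2}\varphi_{\ell-2}$ with $\varphi_0=1$, $\varphi_1=x-a$.

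\textbf{Step 3.} Writing $\varPhi(x)=\sum_k h_{s-k}x^{s-k}$, I would determine the top power of $t_1$ in each $h_{s-k}$ by running this recursion, or equivalently from the tiling formula $\varphi_s(x)=\sum_{\mathcal T}\prod_{i\in\mathrm{mon}(\mathcal T)}(x-d_i)\prod_{\{j,j+1\}\in\mathrm{dom}(\mathcal T)}(-e_j^{2})$ summed over monomer/domino tilings $\mathcal T$ of $\{1,\dots,s\}$, in which only the positions $1,2,s$ contribute nonzero $d_i$. A careful analysis then shows that for $0\le i\le s-1$ the $x^i$-coefficient of $\varphi_s$ has strictly positive $t_1$-degree, its leading part being a nonzero rational-in-$n$ multiple of one of $\eta$, $a\eta-\beta^2=\tfrac{(n-s)(s-1)}{n}$, $a\eta-\beta^2-(1-s)^2$, $\dots$; for instance the $x^{s-1}$-coefficient is $-\operatorname{tr} T=-\bigl(a+\ast+\eta t_1^2\bigr)$, of $t_1$-degree $2$, and the constant term equals $\pm(1-s)^{s-2}(a\eta-\beta^2)t_1^s+(\text{lower order})$. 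Since all these leading scalars are nonzero when $n>s\ge2$, we get $h_{s-k}\in\R[t_1]\setminus\R$ for every $1\le k\le s$.

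\textbf{Main obstacle.} The delicate part is Step 3, for two reasons. First, intermediate coefficients can be $t_1$-free (e.g.\ the constant term of $\varphi_3$ is $s^2a$ when $s\ge4$), so the analysis must carry along their exact values rather than mere degree bounds. Second, and more seriously, for some $k$ several tilings — equivalently several principal minors of $\bar B$ — feed into the top power of $t_1$ in $h_{s-k}$: this happens e.g.\ for $h_{s-2}$ when $s$ is even, where the potential degree-$3$-in-$t_1$ contribution cancels and the genuine leading term has degree $2$, so one must check that a signed sum of products of $a,\beta,1-s,-s,\eta$ is nonzero. Through the recursion this reduces to the non-vanishing of certain coefficients of $(1-x^2)^{(s-2)/2}$ (valid since $(s-2)/2\ge k/2$ in the relevant range) and of the linear-in-$n$ quantities displayed above. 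I expect the bulk of the work to be this parity-sensitive bookkeeping; evaluating directly the finitely many principal minors of $\bar B$ capable of attaining the maximal $t_1$-degree may in the end be cleaner than the general tiling count.
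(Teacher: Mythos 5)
Your Steps 1--2 are correct: the specialization of $\bar{B}$ at ${\bm u}$ is exactly the matrix the paper writes down, and your observation that its off-diagonal support is a Hamiltonian path $1,s,2,s-1,3,\dots$ (so that $\bar{B}_{\bm u}$ is permutation-similar to a tridiagonal matrix, with nonzero diagonal only in positions $1$, $2$ and $s$ of the path order) is accurate and is a clean repackaging of the ``X-shaped'' determinant the paper expands directly. Your spot checks (the trace, the constant term $\pm(s-1)^{s-2}(a\eta-\beta^2)t_1^s$ with $a\eta-\beta^2=\tfrac{(n-s)(s-1)}{n}$) agree with the paper's cases $(a2)$, $(b3)$ and the $x^{s-1}$ coefficient. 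In substance the strategy is the same as the paper's: enumerate which products of entries in the expansion of $\det(xI-\bar{B}_{\bm u})$ attain the maximal $t_1$-degree in each coefficient $h_{s-k}^{({\bm u})}$, split by the parities of $s$ and $k$, and verify the resulting leading coefficient is nonzero.

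The genuine gap is that the one case where this verification is nontrivial is not carried out. When $s$ and $k$ are both even with $2\leq k\leq s-2$ (your $h_{s-2}$ example is $k=2$), the coefficient of $t_1^{k}$ in $h_{s-k}^{({\bm u})}$ receives contributions from four distinct families of terms (the $(s,s)$ entry paired with the $(1,1)$ entry, with the central $-s$, or with nothing extra; $k/2$ pairs of $(1-s)t_1$; and the corner pair $(1,s)$--$(s,1)$), and your proposal only ``expects'' that their signed sum is nonzero. This is precisely where the paper does real work: it computes the sum in closed form as
\begin{align*}
\frac{s\bigl\{\bigl(k(k+s^2-4s+2)-s^3+4s^2-5s+2\bigr)n-k(k+s^2-4s+2)\bigr\}}{nk(s-2)}\binom{(s-2)/2}{(k-2)/2}
\end{align*}
(equation \eqref{eq5.61}) and then shows that vanishing would force either $-(s-1)^2(s-2)=0$ or $n<0$, both impossible for $n>s\geq 4$ and $2\leq k\leq s-2$. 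Reducing the question to ``coefficients of $(1-x^2)^{(s-2)/2}$'' does not suffice, because the quantity that must not vanish is a linear-in-$n$ combination of several such binomial coefficients with coefficients depending on $a$, $\beta$, $\eta$, $1-s$, $-s$; without the explicit estimate the lemma is not proved. (A small inaccuracy in your discussion: for $s$ even every nonzero entry of $\bar{B}_{\bm u}$ has even $t_1$-degree, so there is no degree-$(k+1)$ term that ``cancels''; the maximal degree is structurally $k$, and the issue is possible cancellation \emph{among} the degree-$k$ contributions.) The remaining cases ($s$ odd, or $k$ odd, or $k=s$) have a unique, or manifestly non-cancelling, source for the top-degree term, as you indicate and as the paper's cases $(a1)$--$(a3)$, $(b1)$, $(b3)$ confirm.
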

\noindent To prove lemma \ref{lem5.4}, let us put ${\bm u}=(u_0, \cdots, u_s)$ and
\begin{align*}
&g_{{\bm u}}(x)=g(u_0, \cdots, u_s; x)=x^s+t_1x+1 \in \R(t_1)[x], \\
&f_{\bm u}(t ; x)=x^n+tg_{{\bm u}}(x) \in \R(t_1, t)[x] \ (n>s), \\
&A_{{\bm u}}(t)=(a_{ij}^{({\bm u})}(t))_{1 \leq i,j \leq n}=A(u_0, \cdots, u_s,t) \in \mathrm{Sym}_n(\R(t_1, t)), \\
&B_{{\bm u}}=(b_{ij}^{({\bm u})})_{1 \leq i,j \leq s}=B(t_0, \cdots, u_s) \in \mathrm{Sym}_s(\R(t_1)), \ B_{{\bm u}}(t)=t^2B_{{\bm u}}.
\end{align*}
Then, by equation \eqref{eq5.11}, we have
\footnotesize{
\begin{align*} 
A_{{\bm u}}(t)
=
\scalebox{1}[1]
{$\left[
\begin{array}{cccc|cccc}
n & 0 & \dots & 0 & st & 0 & \dots & t_1t \\
0 &&& -(n-s)t & 0 & \dots & -(n-1)t_1t & -nt \\
\vdots && \rotatebox[origin=c]{295}{\vdots} & \rotatebox[origin=c]{295}{\vdots} && \rotatebox[origin=c]{295}{\vdots} & \rotatebox[origin=c]{295}{\vdots} & 0  \\
0 & -(n-s)t & \rotatebox[origin=c]{295}{\vdots} &&& \rotatebox[origin=c]{295}{\vdots} & 0 & 0 \\ \cline{1-8}
st & 0 &&&&&& \\
0 & \vdots & \rotatebox[origin=c]{295}{\vdots} & \rotatebox[origin=c]{295}{\vdots} & \multicolumn{4}{c}{\raisebox{-10pt}[0pt][0pt]{$C_{{\bm u}}(t)$}} \\
\vdots & -(n-1)t_1t & \rotatebox[origin=c]{295}{\vdots} & 0 &&&& \\  
t_1t & -nt & 0 & 0 &&&& \\
\end{array}
\right]
$},
\end{align*}
}
\normalsize
where $C_{{\bm u}}(t)=(c_{ij}^{({\bm u})}(t))_{1\leq i,j \leq s}=C(u_0, \cdots, u_s,t)$ and 
\begin{align*}
c_{ij}^{(\bm u)}(t)=b_{ij}(u_0, \cdots, u_s)t^2+\lambda_{ij}(u_0, \cdots, u_s)t \hspace{3mm} (\lambda_{ij}(u_0, \cdots, u_s) \in \R(t_1)). 
\end{align*}
Moreover, by equation \eqref{eq5.2}, we also have 
\footnotesize{
\begin{align*}
A_{\bm u}(t)_1=
\scalebox{0.99}[1]
{$\left[
\begin{array}{cccc|cccc}
1 & 0 & \dots & 0 & 0 & 0 & \dots & 0 \\
0 & 0 & \dots & -(n-s)t & 0 & \dots & -(n-1)t_1t & -nt \\
\vdots & \vdots & \rotatebox[origin=c]{295}{\vdots} & \rotatebox[origin=c]{295}{\vdots} && \rotatebox[origin=c]{295}{\vdots} & \rotatebox[origin=c]{295}{\vdots} & 0  \\
0 & -(n-s)t & \rotatebox[origin=c]{295}{\vdots} &&& \rotatebox[origin=c]{295}{\vdots} & 0 & 0 \\ \cline{1-8}
0 & 0 &&&&&& \\
0 & \vdots & \rotatebox[origin=c]{295}{\vdots} & \rotatebox[origin=c]{295}{\vdots} & \multicolumn{4}{c}{\raisebox{-10pt}[0pt][0pt]{$C_{\bm u}(t)_1$}} \\
\vdots & -(n-1)t_1t & \hspace{-3mm} \rotatebox[origin=c]{295}{\vdots} & 0 &&&& \\  
0 & \hspace{-4mm} -nt & 0 & 0 &&&& \\
\end{array}
\right]
$}.
\end{align*} 
\normalsize
Here, 
$C_{\bm u}(t)_1=(c_{ij}^{(\bm u)}(t)_1)_{1\leq i,j \leq s}=C(u_0, \cdots, u_s,t)_1$ and
\begin{align*}
c_{ij}^{(\bm u)}(t)_1=\bar{b}_{ij}(u_0, \cdots, u_s)t^2+\lambda_{ij}(u_0, \cdots, u_s)t \hspace{3mm} (\bar{b}_{ij}(u_0, \cdots, u_s) \in \R). 
\end{align*}
Note that, by equation \eqref{eq5.22}, we have
\begin{align} \label{eq5.6} 
\bar{b}_{ij}^{({\bm u})}
=
\begin{cases}
b_{11}^{({\bm u})}-(s^2/n) & \text{$(i,j)=(1,1)$} \\[0.8mm]
b_{1s}^{({\bm u})}-(s/n)t_{1} & \text{$(i,j)=(1,s) \ \text{or} \ (s,1)$} \\[0.8mm]
b_{ss}^{({\bm u})}-(1/n)t_1^2 & \text{$(i,j)=(s,s)$} \\[0.8mm] 
b_{ij}^{({\bm u})} & \text{otherwise}.
\end{cases}
\end{align}
Let us put $\bar{B}_{\bm u}=(\bar{b}_{ij}^{({\bm u})})_{1 \leq i,j \leq s}$ and $\bar{B}_{\bm u}(t)=t^2\bar{B}_{\bm u}$.
Then, since 
\begin{align*}
M_s(g_{\bm u})&=M_s(x^s+t_1x+1, sx^{s-1}+t_1) \\
&=sM_s(x^s, x^{s-1})+t_1M_s(x^s, 1)-st_1M_s(x^{s-1}, x)-sM_s(x^{s-1}, 1) \\
&\hspace{74mm} +t_1^2M_s(x, 1)+t_1M_s(1, 1),
\end{align*}
we have \\[1.5mm]
$(a)$ if $s=2$,
\begin{align*}
B_{\bm u}
=
\left[
\begin{array}{cc}
2 & t_1 \\
t_1 & t_1^2-2
\end{array}
\right],
\end{align*}
$(b)$ if $s\geq 3$,
\begin{align*}
b_{ij}^{({\bm u})}
=
\begin{cases}
s & \text{$(i, j)=(1, 1)$} \\
t_1 & \text{$(i, j)=(1, s)$ or $(s, 1)$} \\
(1-s)t_1 & \text{$i+j=s+1$, $2 \leq i,j \leq s-1$} \\
-s & \text{$i+j=s+2$} \\
t_1^2 & \text{$(i, j)=(s, s)$}, \\
0 & \text{otherwise},
\end{cases}
\end{align*}
which, by equation \eqref{eq5.6}, implies \\[1.5mm]
$(a^{\prime})$ if $s=2$,
\begin{align*}
\bar{B}_{\bm u}
=
\left[
\begin{array}{cc}
2(n-2)/n & (n-2)t_1/n \\
(n-2)t_1/n & (n-1)t_1^2/n-2
\end{array}
\right],
\end{align*}
$(b^{\prime})$ if $s \geq 3$,
\begin{align*}  
\bar{b}_{ij}^{({\bm u})}
=
\begin{cases}
s(n-s)/n & \text{$(i,j)=(1,1)$} \\
(n-s)t_1/n & \text{$(i,j)=(1,s) \ \text{or} \ (s,1)$} \\
(1-s)t_1 & \text{$i+j=s+1$, $2 \leq i,j \leq s-1$} \\
-s & \text{$i+j=s+2$} \\
(n-1)t_1^2/n & \text{$(i,j)=(s,s)$}, \\  
0 & \text{otherwise}.
\end{cases}
\end{align*}
Therefore, if $s \geq 3$, the matrix $\bar{B}_{\bm u}=(\bar{b}_{ij}^{({\bm u})})_{1\leq i,j \leq s}$ has the expression of the form
\begin{align*}
\scalebox{0.93}[1]
{$\left[
\begin{array}{ccccccc}
s(n-s)/n & 0 & 0 & 0 & \cdots & 0 & (n-s)t_1/n \\
0 & 0 & 0 & \cdots & 0 & (1-s)t_1 & -s \\
0 & 0 && \rotatebox[origin=c]{295}{\vdots} & (1-s)t_1 & -s & 0 \\
0 & \vdots & \rotatebox[origin=c]{295}{\vdots} & \rotatebox[origin=c]{295}{\vdots} & \rotatebox[origin=c]{295}{\vdots} & \rotatebox[origin=c]{295}{\vdots} & \vdots \\
\vdots & 0 & (1-s)t_1 & \rotatebox[origin=c]{295}{\vdots} & \rotatebox[origin=c]{295}{\vdots} && 0 \\
0 & (1-s)t_1 & -s & \rotatebox[origin=c]{295}{\vdots} &&& 0 \\
(n-s)t_1/n & -s & 0 & \cdots & 0 & 0 & (n-1)t_1^2/n
\end{array} 
\right]$}.
\end{align*}
Here, let us denote by
\begin{align*}
\varPhi_{\bm u}(x)=\sum_{k=0}^{s}h_{s-k}^{({\bm u})}x^{s-k}=\varPhi(u_0, \cdots, u_s; x) \ \left(=\sum_{k=0}^{s} h_{s-k}(u_0, \cdots, u_s)x^{s-k} \right)
\end{align*}
the characteristic polynomial of $\bar{B}_{\bm u}$. Note that since we have $h_{s-k}^{({\bm u})} \in \R[t_1]$ by the proof of Lemma \ref{lem5.3}, we have only to prove $h_{s-k}^{({\bm u})}$ is non-constant for any $k$ $(1 \leq  k \leq s)$.

By the above expression of $\bar{B}_{\bm u}$, we have \\[1mm]
$(a^{\prime \prime})$ if $s=2$,
\begin{align*}
\varPhi_{\bm u}(x)=x^2-\frac{(n-1)t_1^2-4}{n}x+\frac{(n-2)t_1^2-4n+8}{n},
\end{align*}
\normalsize
$(b^{\prime \prime})$ if $s \geq 3$,
\footnotesize
\begin{align*}
\varPhi_{\bm u}(x)
=
\begin{cases}
\scalebox{0.83}[1]
{$\left|
\begin{array}{ccccccccc}
x-s(n-s)/n &&&&&&&& -(n-s)t_1/n \\
& x &&&&&& (s-1)t_1 & s \\
&& \rotatebox[origin=c]{65}{\vdots} &&&& \rotatebox[origin=c]{295}{\vdots} & s & \\
&&& \rotatebox[origin=c]{65}{\vdots} && \rotatebox[origin=c]{295}{\vdots} & \rotatebox[origin=c]{295}{\vdots} & \\
&&&& x+(s-1)t_1 & s && \\
&&& \rotatebox[origin=c]{290}{\vdots} & s & x &&& \\
&& \rotatebox[origin=c]{290}{\vdots} & \rotatebox[origin=c]{290}{\vdots} &&& \rotatebox[origin=c]{65}{\vdots} && \\
& (s-1)t_1 & s &&&&& x & \\
-(n-s)t_1/n & s &&&&&&& x-(n-1)t_1^2/n
\end{array} 
\right|$} \vspace{2mm} \\ 
&\hspace{-16mm} \text{($s$ is odd)}, \\[3mm]
\scalebox{0.83}[1]
{$\left|
\begin{array}{cccccccc}
x-s(n-s)/n &  &  &  & && & -(n-s)t_1/n \\
& x &  &&&    & (s-1)t_1 & s \\
&  & \hspace{-7mm} \rotatebox[origin=c]{62}{\vdots} & &&  \rotatebox[origin=c]{300}{\vdots} & \hspace{-1mm} s & \\
&  &   & \hspace{-7mm} x  & \hspace{-5mm} (s-1)t_1 & \hspace{0mm} \rotatebox[origin=c]{300}{\vdots} & &  \\
&&& \hspace{-7mm} (s-1)t_1 & \hspace{-3mm} x+s &&& \\
&& \rotatebox[origin=c]{300}{\vdots} & \hspace{3mm} \rotatebox[origin=c]{300}{\vdots} && \rotatebox[origin=c]{65}{\vdots} && \\
 & (s-1)t_1 & s & && & x &  \\
-(n-s)t_1/n & s &   &  &&&  & x-(n-1)t_1^2/n
\end{array} 
\right|$} \vspace{2mm} \\
&\hspace{-17mm} \text{($s$ is even)}.
\end{cases}
\end{align*}
\normalsize
\begin{exa}
$(1)$ Put $s=7$ and $n=10$. Then, we have 
\begin{align*}
&g_{\bm u}(x)=x^7+t_1x+1, \ f_{\bm u}(t; x)=x^{10}+t(x^7+t_1x+1),
\end{align*}
\footnotesize
\begin{align*}
\scalebox{1}[1]
{$\varPhi_{\bm u}(x)$}&=
\scalebox{1}[1]
{$\left|
\begin{array}{ccccccc}
x-21/10 & 0 & 0 & 0 & 0 & 0 & -3t_1/10 \\
0 & x & 0 & 0 & 0 & 6t_1 & 7 \\
0 & 0 & x & 0 & 6t_1 & 7 & 0 \\
0 & 0 & 0 & x+6t_1 & 7 & 0 & 0 \\
0 & 0 & 6t_1 & 7 & x & 0 & 0 \\
0 & 6t_1 & 7 & 0 & 0 & x & 0 \\
-3t_1/10 & 7 & 0 & 0 & 0 & 0 & x-9t_1^2/10
\end{array} 
\right|$} \\[2mm]
=&
x^7+\left(-\frac{9}{10}t_1^2+6t_1-\frac{21}{10}\right)x^6+\left(-\frac{27}{5}t_1^3-\frac{351}{5}t_1^2-\frac{63}{5}t_1-147\right)x^5 \\[1mm]
&+\left(\frac{324}{5}t_1^4-\frac{2106}{5}t_1^3+\frac{1197}{5}t_1^2-588t_1+\frac{3087}{10}\right)x^4 \\[1mm]
&+\left(\frac{1944}{5}t_1^5+\frac{5832}{5}t_1^4+\frac{5859}{5}t_1^3+\frac{16758}{5}t_1^2+\frac{6174}{5}t_1+7203\right)x^3 \\[1mm]
&+\left(-\frac{5832}{5}t_1^6+\frac{34992}{5}t_1^5-\frac{21546}{5}t_1^4+\frac{50274}{5}t_1^3-\frac{95697}{10}t_1^2+14406t_1-\frac{151263}{10}\right)x^2 \\[1mm]
&+\biggl(-\frac{34992}{5}t_1^7+\frac{11664}{5}t_1^6-\frac{81648}{5}t_1^5+\frac{15876}{5}t_1^4-\frac{111132}{5}t_1^3+\frac{21609}{5}t_1^2-\frac{151263}{5}t_1 \\[1mm]
&-117649\biggr)x+\frac{69984}{5}t_1^7+\frac{2470629}{10}.
\end{align*}
\normalsize
$(2)$ Put $s=8$ and $n=12$. Then, we have 
\begin{align*}
&g_{\bm u}(x)=x^8+t_1x+1, \ f_{\bm u}(t ; x)=x^{12}+t(x^8+t_1x+1) 
\end{align*}
and
\footnotesize
\begin{align*}
\scalebox{1}[1]
{$\varPhi_{\bm u}(x)$}
&=
\scalebox{1}[1]
{$\left|
\begin{array}{cccccccc}
x-8/3 & 0 & 0 & 0 & 0 & 0 & 0 & -t_1/3 \\
0 & x & 0 & 0 & 0 & 0 & 7t_1 & 8 \\
0 & 0 & x & 0 & 0 & 7t_1 & 8 & 0 \\
0 & 0 & 0 & x & 7t_1 & 8 & 0 & 0 \\
0 & 0 & 0 & 7t_1 & x+8 & 0 & 0 & 0 \\
0 & 0 & 7t_1 & 8 & 0 & x & 0 & 0 \\
0 & 7t_1 & 8 & 0 & 0 & 0 & x & 0 \\
-t_1/3 & 8 & 0 & 0 & 0 & 0 & 0 & x-11t_1^2/12 
\end{array} 
\right|$} \\[2mm]
=&
x^8+\left(-\frac{11}{12}t_1^2+\frac{16}{3}\right)x^7+\left(-152t_1^2-\frac{640}{3}\right)x^6+\left(\frac{539}{4}t_1^4-256t_1^2-1024\right)x^5 \\[1mm]
&+\left(\frac{22736}{3}t_1^4+\frac{45824}{3}t_1^2+16384\right)x^4+\left(-\frac{26411}{4}t_1^6-\frac{22736}{3}t_1^4+\frac{31744}{3}t_1^2+65536\right)x^3 \\[1mm]
&+\left(-\frac{355348}{3}t_1^6-213248t_1^4-\frac{1064960}{3}t_1^2-524288\right)x^2+\biggl(\frac{1294139}{12}t_1^8+\frac{1075648}{3}t_1^6 \\[1mm]
&+\frac{1404928}{3}t_1^4+\frac{1835008}{3}t_1^2-\frac{4194304}{3}\biggr)x-\frac{823543}{3}t_1^8+\frac{16777216}{3}.
\end{align*}
\end{exa}
\normalsize
\begin{proof}[Proof of Lemma \ref{lem5.4}]
To prove Lemma \ref{lem5.4}, it is enough to prove $\deg h_{s-k}^{({\bm u})} \geq 1$ for any $k$ $(1 \leq k \leq s)$. This is clear for $s=2$ by $(a^{\prime \prime})$ and we suppose $s \geq 3$ hereafter. 
To prove $\deg h_{s-k}^{({\bm u})} \geq 1$ $(1 \leq k \leq s)$, let us compute the leading term of $h_{s-k}^{({\bm u})}$ $(\in \R[t_1])$. Then, since $h_{s-k}^{({\bm u})}$ is the coefficient of the term $h_{s-k}^{({\bm u})}x^{s-k}$ of the characteristic polynomial $\varPhi_{\bm u}(x)$, we need to maximize the degree in $t_1$ when we take `$s-k$' $x$ and the remaining $k$ elements from $\R[t_1]$. \\[1.5mm]
$(a)$ Suppose $s$ is odd. Let us divide the case into three other subcases. \\[1.5mm]
$(a1)$ Suppose $k$ is odd and $1 \leq k \leq s-2$. \\
In this case, the degree of the leading term of $h_{s-k}^{({\bm u})}$ is $k+1$. In fact, it is obtained by taking 
\begin{enumerate}
\item[$(a11)$]
$-(n-1)t_1^2/n$ from the $(s, s)$ entry $x-(n-1)t_1^2/n$, 
\item[$(a12)$]
`$k-1$' $(s-1)t_1$ from entries of the form $(i, s+1-i)$ $(2 \leq i \leq s-1)$. 
\end{enumerate}

First, suppose we take the $(s, s)$ entry $x-(n-1)t_1^2/n$ from the $s$-th row. Then we must take the $(1, 1)$ entry from the first row. Next, let us proceed to the $(s-1)$-th row. If we take the $(s-1, s-1)$ entry $x$ from the $(s-1)$-th row, then we must also take $x$ from the second row, while if we take $(s-1)t_1$ from the $(s-1)$-th row, then we must also take $(s-1)t_1$ from the second row. The situation is the same for the $(s-2)$-th row, the $(s-3)$-th row ... and so on, which implies that $(s-1)t_1$ must occur in pair. 

Hence, the leading term of $h_{s-k}^{({\bm u})}$ is
\begin{align*}
-\frac{n-1}{n}t_1^2 \cdot \binom{(s-3)/2}{(k-1)/2}\{(-1)\cdot(s-1)^2t_1^2\}^{(k-1)/2} \hspace{3mm} \left(\binom{n}{0}=1  \ (n \geq 0)\right)
\end{align*}
and the degree of this term is $k+1$ $(\geq 2)$. \\[1mm]
$(a2)$ Suppose $k$ is odd and $k=s$. \\
If $k=s$, $h_{s-k}^{({\bm u})}=h_0^{({\bm u})}$ is the constant term of $\varPhi_{\bm u}(x)$. In this case, the degree of the leading term of $h_0^{({\bm u})}$ is $s$. In fact, it is obtained by taking 
\begin{enumerate}
\item[$(a21)$]
$-(n-1)t_1^2/n$ from the $(s, s)$ entry $x-(n-1)t_1^2/n$, \vspace{1mm}
\item[$(a22)$]
If $s \geq 5$ $(\Leftrightarrow (s,k)\neq (3,3))$, `$(s-3)/2$' pairs of $(s-1)t_1$ from entries of the form $(i, s+1-i)$ $(2 \leq i \leq (s-1)/2, \ (s+3)/2 \leq i \leq s-1)$, \vspace{1mm}
\item[$(a23)$]
$(s-1)t_1$ from the $((s+1)/2, (s+1)/2)$ entry $x+(s-1)t_1$, \vspace{1mm}
\item[$(a24)$]
$-s(n-s)/n$ from the $(1, 1)$ entry $x-s(n-s)/n$
\end{enumerate}
or by taking 
\begin{enumerate}
\item[$(a25)$]
all anti-diagonal entries. 
\end{enumerate}
Therefore, the leading term of $h_0^{({\bm u})}$ is
\begin{align*}
-&\dfrac{n-1}{n}t_1^2 \cdot \{(-1)\cdot(s-1)^2t_1^2\}^{(s-3)/2}\cdot(s-1)t_1\cdot\left(-\frac{s(n-s)}{n}\right) \\
&\hspace{20mm}+(-1)\cdot\left(-\dfrac{n-s}{n}t_1\right)^2\cdot \{(-1)\cdot(s-1)^2t_1^2\}^{(s-3)/2}\cdot(s-1)t_1 \\
&=\frac{(n-s)(s-1)}{n} \cdot (-1)^{(s-3)/2}(s-1)^{s-2}t_1^s \\
&=(-1)^{(s-3)/2}\dfrac{(n-s)(s-1)^{s-1}}{n}t_1^{s} 
\end{align*}  
for any $s$ $(s \geq 3)$ and the degree of this term is $s$. \\[1.5mm]
$(a3)$ Suppose $k$ is even. \\
In this case, we have $2 \leq k \leq s-1$ and the degree of the leading term of $h_{s-k}^{({\bm u})}$ is $k+1$. In fact, it is obtained by taking 
\begin{enumerate}
\item[$(a31)$]
$-(n-1)t_1^2/n$ from the $(s, s)$ entry $x-(n-1)t_1^2/n$, \vspace{1.5mm}
\item[$(a32)$]
If $s \geq 5$ $(\Leftrightarrow (s,k)\neq (3,2))$, `$(k-2)/2$' pairs of $(s-1)t_1$ from entries of the form $(i, s+1-i)$ $(2 \leq i \leq (s-1)/2, \ (s+3)/2 \leq i \leq s-1)$, \vspace{1.5mm}
\item[$(a33)$]
$(s-1)t_1$ from the $((s+1)/2, (s+1)/2)$ entry $x+(s-1)t_1$.
\end{enumerate}
Therefore, the leading term of $h_{s-k}^{({\bm u})}$ is 
\begin{align*}
-\frac{n-1}{n}t_1^2\cdot \binom{(s-3)/2}{(k-2)/2} \{(-1) \cdot (s-1)^2t_1^2\}^{(k-2)/2}\cdot(s-1)t_1 
\end{align*}
for any $s$ $(s \geq 3)$ and the degree of this term is $k+1$ $(\geq 3)$. \\[1.5mm]
$(b)$ Suppose $s$ is even ($s \geq 4$). We also divide this case into three other subcases. \\[1.5mm]
$(b1)$ Suppose $k$ is odd. \\
In this case, we have $1 \leq k \leq s-1$ and the degree of the leading term of $h_{s-k}^{({\bm u})}$ is $k+1$. In fact, it is obtained by taking 
\begin{enumerate}
\item[$(b11)$]
$-(n-1)t_1^2/n$ from the $(s, s)$ entry $x-(n-1)t_1^2/n$,
\item[$(b12)$]
`$(k-1)/2$' pairs of $(s-1)t_1$ from entries of the form $(i, s+1-i)$ $(2 \leq i \leq s-1)$.
\end{enumerate}
Therefore, the leading term of $h_{s-k}^{({\bm u})}$ is
\begin{align*}
-\frac{n-1}{n}t_1^2 \cdot \binom{(s-2)/2}{(k-1)/2} \{ (-1) \cdot (s-1)^2t_1^2 \}^{(k-1)/2}
\end{align*}
and the degree of this term is $k+1$ $(\geq 2)$. \\[1.5mm]
$(b2)$ Suppose $k$ is even and $2 \leq k \leq s-2$. \\
In this case, the degree of the leading term of $h_{s-k}^{({\bm u})}$ is $k$. In fact, it is obtained by taking
\begin{enumerate}
\item[$(b21)$]
$-(n-1)t_1^2/n$ from the $(s, s)$ entry $x-(n-1)t_1^2/n$,
\item[$(b22)$]
`$(k-2)/2$' pairs of $(s-1)t_1$ from entries of the form $(i, s+1-i)$ $(2 \leq i \leq s-1)$,
\item[$(b23)$]
$-s(n-s)/n$ from the $(1, 1)$ entry $x-s(n-s)/n$
\end{enumerate}
or by taking
\begin{enumerate}
\item[$(b24)$]
$-(n-1)t_1^2/n$ from the $(s, s)$ entry $x-(n-1)t_1^2/n$, \vspace{1.5mm}
\item[$(b25)$]
I f $s \geq 6$ $(\Leftrightarrow (s, k) \neq (4. 2))$, `$(k-2)/2$' pairs of $(s-1)t_1$ from entries of the form $(i, s+1-i)$ $(2 \leq i \leq (s-2)/2, \ (s+4)/2 \leq i \leq s-1)$, \vspace{1.5mm}
\item[$(b26)$]
$s$ from the $((s+2)/2, (s+2)/2)$ entry $x+s$
\end{enumerate}
or by taking
\begin{enumerate}
\item[$(b27)$]
`$k/2$' pairs of $(s-1)t_1$ from entries of the form $(i, s+1-i)$ $(2 \leq i \leq s-1)$ 
\end{enumerate}
or by taking
\begin{enumerate}
\item[$(b28)$]
One pair of $-(n-s)t_1/n$ from the $(1,s)$ and the $(s,1)$ entry,
\item[$(b29)$]
`$(k-2)/2$' pairs of $(s-1)t_1$ from entries of the form $(i, s+1-i)$ $(2 \leq i \leq s-1)$. 
\end{enumerate}
Here, note that if we take the $(s,1)$ entry $-(n-s)t_1/n$ from the $s$-th row, we must also take the $(1,s)$ entry $-(n-s)t_1/n$ from the first row.

Therefore, the leading term of $h_{s-k}^{(\bm u)}$ is \\[0mm]
\footnotesize
\begin{align*}
\displaystyle -&\dfrac{n-1}{n}t_1^2\cdot\binom{(s-2)/2}{(k-2)/2} \{(-1)\cdot (s-1)^2t_1^2 \}^{(k-2)/2}\cdot\left( -\frac{s(n-s)}{n} \right) \\[1.5mm]
&-\dfrac{n-1}{n}t_1^2\cdot\binom{(s-4)/2}{(k-2)/2} \{(-1)\cdot (s-1)^2t_1^2 \}^{(k-2)/2}\cdot s+\binom{(s-2)/2}{k/2}\{ (-1)\cdot(s-1)^2t_1^2 \}^{k/2} \\[1.5mm]
&+\left( (-1)\cdot\frac{\{-(n-s)\}^2}{n^2}t_1^2\right) \cdot \binom{(s-2)/2}{(k-2)/2} \{(-1)\cdot (s-1)^2t_1^2\}^{(k-2)/2}  \\[2mm]
&\hspace{-3mm}=\biggl( \frac{s(n-s)(n-1)}{n^2}\binom{(s-2)/2}{(k-2)/2}-\frac{s(n-1)}{n}\binom{(s-4)/2}{(k-2)/2} \\[1.5mm]
&\hspace{24mm}-(s-1)^2\binom{(s-2)/2}{k/2}-\frac{(n-s)^2}{n^2}\binom{(s-2)/2}{(k-2)/2} \biggr)\{(-1)\cdot (s-1)^2t_1^2\}^{(k-2)/2}t_1^2.
\end{align*}
\normalsize
for any $s$ $(s \geq 4)$. Then, since
\footnotesize
\begin{align*}
&\binom{(s-4)/2}{(k-2)/2}=\frac{s-k}{s-2}\binom{(s-2)/2}{(k-2)/2}, \ \binom{(s-2)/2}{k/2}=\frac{s-k}{k}\binom{(s-2)/2}{(k-2)/2}, 
\end{align*}
\normalsize
we have \\[0mm]
\footnotesize
\begin{align} \label{eq5.61}
&\frac{s(n-s)(n-1)}{n^2}\binom{(s-2)/2}{(k-2)/2}-\frac{s(n-1)}{n}\binom{(s-4)/2}{(k-2)/2} \\[1.5mm] \notag
&\hspace{36.3mm}-(s-1)^2\binom{(s-2)/2}{k/2}-\frac{(n-s)^2}{n^2}\binom{(s-2)/2}{(k-2)/2} \\[2mm] \notag
&=\left(\frac{s(n-s)(n-1)}{n^2}-\frac{s(s-k)(n-1)}{n(s-2)}-\frac{(s-1)^2(s-k)}{k}-\frac{(n-s)^2}{n^2} \right)\binom{(s-2)/2}{(k-2)/2} \\[2mm] \notag 
\vspace{2mm}
&=\frac{s\{\left(k(k+s^2-4s+2)-s^3+4s^2-5s+2\right)n-k(k+s^2-4s+2)\}}{nk(s-2)}\binom{(s-2)/2}{(k-2)/2}. \notag
\end{align} 
\normalsize
Hence, if the above value becomes zero, we have
\begin{align*}
\left(k(k+s^2-4s+2)-s^3+4s^2-5s+2\right)n-k(k+s^2-4s+2)=0,
\end{align*}
which implies
\begin{align} \label{eq5.7}
k(k+s^2-4s+2)=0, \ -s^3+4s^2-5s+2=0
\end{align}
or
\begin{align} \label{eq5.8}
n=\frac{k(k+s^2-4s+2)}{k(k+s^2-4s+2)-s^3+4s^2-5s+2}.
\end{align}
Here, \eqref{eq5.7} is impossible since $-s^3+4s^2-5s+2=-(s-1)^2(s-2)$ and $s \geq 4$. Also, \eqref{eq5.8} is impossible since, for any $s \geq 4$ and $2 \leq k \leq s-2$, we have
\begin{align*}
k(k+s^2-4s+2) &\geq 2(2+s^2-4s+2) \geq 2(s-2)^2> 0
\end{align*}
and
\begin{align*}
&k(k+s^2-4s+2)-s^3+4s^2-5s+2 \\
&\leq (s-2)\{ (s-2)+s^2-4s+2 \}-s^3+4s^2-5s+2 \\
&=-s^2+s+2 \\
&=-(s+1)(s-2)<0,
\end{align*}
which implies $n<0$, a contradiction. Thus, the above value \eqref{eq5.61} is non-zero and the degree of the leading term of $h_{s-k}^{({\bm u})}$ is $k$. \\[1.5mm]
$(b3)$ Suppose $k$ is even and $k=s$. \\
If $k=s$, $h_{s-k}^{({\bm u})}=h_0^{({\bm u})}$ is the constant term of $\varPhi_{\bm u}(x)$. In this case, the degree of the leading term of $h_0^{({\bm u})}$ is $s$. In fact, it is obtained by taking 
\begin{enumerate}
\item[$(b31)$]
$-(n-1)t_1^2/n$ from the $(s, s)$ entry $x-(n-1)t_1^2/n$, 
\item[$(b32)$]
`$(s-2)/2$' pairs of $(s-1)t_1$ from entries of the form $(i, s+1-i)$ $(2 \leq i \leq s-1)$,
\item[$(b33)$]
$-s(n-s)/n$ from the $(1, 1)$ entry $x-s(n-s)/n$
\end{enumerate}
or by taking 
\begin{enumerate}
\item[$(b34)$]
all anti-diagonal entries. 
\end{enumerate}
Therefore, the leading term of $h_0^{({\bm u})}$ is 
\begin{align*}
-&\dfrac{n-1}{n}t_1^2 \cdot \{(-1)\cdot(s-1)^2t_1^2\}^{(s-2)/2}\cdot \left( -\frac{s(n-s)}{n} \right) \\
&\hspace{21.5mm}+(-1)\cdot\left(-\dfrac{n-s}{n}t_1\right)^2\cdot \{(-1)\cdot(s-1)^2t_1^2\}^{(s-2)/2} \\
&=(-1)^{(s-2)/2}\dfrac{(n-s)(s-1)^{s-1}}{n}t_1^{s}
\end{align*}  
and the degree of this term is $s$ $(s \geq 4)$.
\end{proof}
\begin{lem} \label{lem5.5}
Let ${\bm v}=(v_0, \cdots, v_s) \in \R^{s+1}$ be a real vector and $n$ $(>s)$ be an integer. Put
\begin{align*}
P_{\bm v}(t)=\det M_n(f_{\bm v}(t ; x))=\det M_n(f^{(n)}(v_0, \cdots, v_s, t; x))
\end{align*}
and $\alpha_{\bm v}=\max \{ \alpha \in \R \mid P_{\bm v}(\alpha)=0 \}$. If there exists a real number $\rho_0$ $(>\alpha_{\bm v})$ such that $N_{f_{\bm v}(\xi ; x)}=\gamma_{0}$ for any $\xi>\rho_0$, we have $N_{f_{\bm v}(\xi ; x)}=\gamma_{0}$ for any $\xi>\alpha_{\bm v}$.
\end{lem}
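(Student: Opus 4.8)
The plan is to show that the integer-valued map $\xi \mapsto N_{f_{\bm v}(\xi ; x)}$ is locally constant on the open interval $(\alpha_{\bm v},\infty)$; since that interval is connected it is then constant there, and as it already equals $\gamma_0$ on the subinterval $(\rho_0,\infty)$ it must equal $\gamma_0$ on all of $(\alpha_{\bm v},\infty)$. Two preliminary observations make this work. First, since $f_{\bm v}(\xi ; x)$ is monic in $x$ we have $P_{\bm v}(\xi)=\det M_n(f_{\bm v}(\xi ; x))=\D_{f_{\bm v}(\xi ; x)}$, and because $\alpha_{\bm v}$ is the \textit{largest} real zero of $P_{\bm v}$, it follows that $P_{\bm v}(\xi)\ne 0$, so that $f_{\bm v}(\xi ; x)$ is separable with exactly $n$ distinct roots, for every $\xi>\alpha_{\bm v}$. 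Second, the coefficients of $f_{\bm v}(\xi ; x)=x^n+\xi\, g_{\bm v}(x)$ are affine, hence continuous, functions of $\xi$.

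Next I would fix an arbitrary $\xi_1>\alpha_{\bm v}$ and let $\alpha_1,\dots,\alpha_n$ be the (distinct) roots of $f_{\bm v}(\xi_1 ; x)$. Choose $\varepsilon>0$ smaller than half of $\min_{i\ne j}|\alpha_i-\alpha_j|$ and small enough that every disk $\mathcal D(\alpha_j;\varepsilon)$ centered at a non-real $\alpha_j$ is disjoint from $\R$. Proposition \ref{prop5.3} then provides a $\delta>0$; since the coefficients of $f_{\bm v}(\xi ; x)$ depend continuously on $\xi$, there is $\eta>0$ with $(\xi_1-\eta,\xi_1+\eta)\subset(\alpha_{\bm v},\infty)$ on which those coefficients stay within $\delta$ of those of $f_{\bm v}(\xi_1 ; x)$, so for such $\xi$ the polynomial $f_{\bm v}(\xi ; x)$ has exactly one root in each $\mathcal D(\alpha_j;\varepsilon)$. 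The root lying in a disk around a non-real $\alpha_j$ is then automatically non-real. For a disk around a real $\alpha_j$, suppose its unique root $\beta$ were non-real; then $\bar\beta$ is also a root of the real polynomial $f_{\bm v}(\xi ; x)$, and since $\alpha_j\in\R$ we get $|\bar\beta-\alpha_j|=|\beta-\alpha_j|<\varepsilon$, so $\bar\beta\in\mathcal D(\alpha_j;\varepsilon)$ as well; as $\bar\beta\ne\beta$ this contradicts uniqueness. Hence that root is real, and therefore $N_{f_{\bm v}(\xi ; x)}=N_{f_{\bm v}(\xi_1 ; x)}$ for all $\xi\in(\xi_1-\eta,\xi_1+\eta)$.

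This is exactly the local constancy needed, and the connectedness remark above finishes the argument. I do not expect a genuine obstacle here: the only point requiring care is the conjugate-pairs bookkeeping that keeps real roots real and non-real roots non-real under a small perturbation, which is precisely where separability (guaranteed by $P_{\bm v}(\xi)\ne 0$) enters. One could also bypass Proposition \ref{prop5.3} altogether and argue through Proposition \ref{prop5.2}: the symmetric matrices $M_n(f_{\bm v}(\xi ; x))$ form a polynomial family that is nonsingular on all of $(\alpha_{\bm v},\infty)$, so by continuity of the eigenvalues of a real symmetric matrix no eigenvalue can change sign there, the inertia index $\sigma\bigl(M_n(f_{\bm v}(\xi ; x))\bigr)$ is constant, and Proposition \ref{prop5.2} identifies that constant with $N_{f_{\bm v}(\xi ; x)}$.
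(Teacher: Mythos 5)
Your proof is correct, and its primary route is genuinely different from the paper's. The paper stays entirely in the Bezoutian/inertia framework: it sets $A_{\bm v}(t)=M_n(f_{\bm v}(t;x))$, uses Proposition~\ref{prop5.2} to replace $N_{f_{\bm v}(\xi;x)}$ by $\sigma(A_{\bm v}(\xi))$, defines $\rho_{\bm v}$ as the infimum of those $\rho>\alpha_{\bm v}$ for which the inertia equals $\gamma_0$ on $(\rho,\infty)$, and derives a contradiction from $\rho_{\bm v}>\alpha_{\bm v}$ by applying Proposition~\ref{prop5.3} to the characteristic polynomial $\Omega_{\bm v}(t;x)$ of $A_{\bm v}(t)$: since $\det A_{\bm v}(\xi)\neq 0$ for $\xi>\alpha_{\bm v}$, all eigenvalues are nonzero reals, so the number of positive ones cannot jump near $\rho_{\bm v}$, contradicting the infimum property. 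That is precisely the ``bypass'' you sketch at the end. Your main argument instead applies Proposition~\ref{prop5.3} directly to $f_{\bm v}(\xi;x)$, using $P_{\bm v}(\xi)=\D_{f_{\bm v}(\xi;x)}\neq 0$ on $(\alpha_{\bm v},\infty)$ for separability and the conjugate-pair argument to keep real roots real and non-real roots non-real under small perturbation, then invokes local constancy on a connected interval. What each buys: your route is more elementary and self-contained (it needs neither Proposition~\ref{prop5.2} nor any matrix), and the connectedness argument is cleaner than the infimum bookkeeping; the paper's route avoids the reality bookkeeping entirely (eigenvalues of a real symmetric matrix are automatically real, so only a sign count is needed) and keeps the lemma in the same inertia language used in the rest of the proof of Theorem~\ref{thm5.2}, where the quantity actually computed for large $\xi$ is $\sigma(A_{\bm r}(\xi)_{n-s})$. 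Both arguments share the same two pillars — nonvanishing of $P_{\bm v}$ on $(\alpha_{\bm v},\infty)$ and continuity of roots — and both implicitly assume $P_{\bm v}\not\equiv 0$ so that $\alpha_{\bm v}$ is defined.
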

\begin{proof}
Put $A_{\bm v}(t)=M_n(f_{\bm v}(t ; x))$. Then, by Proposition \ref{prop5.2}, we have $\gamma_0=\sigma(A_{\bm v}(\xi))$ for any $\xi>\rho_0$. Let us also put
\[
R=\{ \rho \in \R \mid \rho>\alpha_{\bm v}, \ \sigma(A_{\bm v}(\xi))=\gamma_0 \ \text{for any $\xi>\rho$} \}.
\] 
Since $R$ is a nonempty set ($\rho_0 \in R$) having a lower bound $\alpha_{\bm v}$, $R$ has the infimum $\rho_{\bm v}$; $\rho_{\bm v}=\inf R$.
Then, it is enough to prove $\rho_{\bm v}=\alpha_{\bm v}$. Here, suppose to the contrary that $\rho_{\bm v}>\alpha_{\bm v}$ and we denote by 
\begin{align*}
\Omega_{\bm v}(t; x)=\sum_{k=0}^n \omega_k(t) x^k \in \R(t)[x] 
\end{align*}
the characteristic polynomial of $A_{\bm v}(t)$. Note that $\omega_k(t) \in \R[t]$ $(0 \leq k \leq n)$ and for any $\xi>\alpha_{\bm v}$, $\Omega_{\bm v}(\xi; x)$ has $n$ non-zero real roots (counted with multiplicity) since $A_{\bm v}(\xi)$ is symmetric and $\det A_{\bm v}(\xi) \neq 0$. Then, by Proposition \ref{prop5.3}, there exists a positive real number $\delta$ such that $\rho_{\bm v}-\delta>\alpha_{\bm v}$ and for any $\xi \in [\rho_{\bm v}-\delta, \rho_{\bm v}+\delta]$, $\Omega_{\bm v}(\xi; x)$ has the same number of positive and hence negative real roots with $\Omega_{\bm v}(\rho_{\bm v}; x)$. On the other hand, since $\rho_{\bm v}=\inf R$, there exist real numbers $\xi_{+}$ $(\rho_{\bm v}<\xi_+<\rho_{\bm v}+\delta)$ and $\xi_-$ $(\rho_{\bm v}-\delta<\xi_-<\rho_{\bm v})$ such that $\sigma(A_{\bm v}(\xi_+))\neq \sigma(A_{\bm v}(\xi_-))$, which implies $\Omega_{\bm v}(\xi_+; x)$ and $\Omega_{\bm v}(\xi_-; x)$ have different number of positive and hence negative real roots. This is a contradiction and we have $\rho_{\bm v}=\alpha_{\bm v}$. 
\end{proof}

\subsection{Proof of Theorem \ref{thm5.2}}
Let ${\bm r}=(r_0, \cdots, r_s) \in \R^{s+1}$ be the vector as in Theorem \ref{thm5.2} and put
\begin{align*}
n_0
=
\begin{cases}
(n-s+1)/2, & \text{$n-s-1$ $:$ even} \\
(n-s+2)/2, & \text{$n-s-1$ $:$ odd}.
\end{cases}
\end{align*}
When $n-s \geq 2$, we inductively define the matrix $A_{\bm r}(t)_{k}=(a_{ij}^{({\bm r})}(t)_k)_{1\leq i,j \leq n}$ $(2 \leq k \leq n-s)$ as the matrix obtained from $A_{\bm r}(t)_{k-1}$ by sweeping out the entries of the $k$-th row ($k$-th column) by the $(k,l_0-k)$ entry $-(n-s)r_st$ ($(l_0-k,k)$ entry $-(n-s)r_st$).
That is, 
we define  $A_{\bm r}(t)_{k}={}^tS_{\bm r}(t)_kA_{\bm r}(t)_{k-1}S_{\bm r}(t)_k$,
where
\begin{align*}
S_{\bm r}(t)_k=
\begin{cases}
\displaystyle \prod_{m=l_0-k+1}^{n}R_n\left(l_0-k,m;-\dfrac{a_{km}^{({\bm r})}(t)_{k-1}}{-(n-s)r_st}\right) &\hspace{-24mm}\text{($2 \leq k \leq n_0$)} \\[3mm]
\displaystyle R_n\left(l_0-k,k;-\dfrac{a_{kk}^{({\bm r})}(t)_{k-1}}{-2(n-s)r_st}\right)\prod_{m=k+1}^{n}R_n\left(l_0-k,m;-\dfrac{a_{km}^{({\bm r})}(t)_{k-1}}{-(n-s)r_st}\right) \\[3mm]
\hfill \text{($n_0 < k \leq n-s$)}.
\end{cases}
\end{align*} 
Then, if $n-s \geq 1$, we can express the matrix $A_{\bm r}(t)_{n-s}$ as follows;
\begin{align*}
A_{\bm r}(t)_{n-s}=
\scalebox{0.99}[0.99]
{$\left[
\begin{array}{cccc|cccc}
1 & 0 & \dots & 0 &&&& \\
0 & 0 & \dots & -(n-s)r_st &&&& \\
\vdots & \vdots & \rotatebox[origin=c]{295}{\vdots} & 0 &\multicolumn{4}{c}{\raisebox{3pt}[0pt][0pt]{\huge{$O$}}} \\
0 & -(n-s)r_st & 0 & 0 &&&& \\ \cline{1-8}
&&&&&&& \\
\multicolumn{3}{c}{\raisebox{-10pt}[0pt][0pt]{\hspace{24mm} \huge{$O$}}} && \multicolumn{4}{c}{\raisebox{-10pt}[0pt][0pt]{$C_{\bm r}(t)_{n-s}$}} \\
&&&&&&& \\  
&&&&&&& \\
\end{array}
\right] 
$}.
\end{align*} 
Note that $a_{km}^{({\bm r})}(t)_{k-1}$ and $a_{kk}^{(\bm r)}(t)_{k-1}$ appearing in $S_{\bm r}(t)_k$ are degree $1$ monomials in $t$ and hence the numbers
$-a_{km}^{({\bm r})}(t)_{k-1}/(-(n-s)r_st)$, $-a_{kk}^{({\bm r})}(t)_{k-1}/(-2(n-s)r_st)$
appearing in $S_{\bm r}(t)_k$ are just real numbers. Therefore, the entries of the $s\times s$ symmetric matrix $C_{\bm r}(t)_{n-s}=(c_{ij}^{({\bm r})}(t)_{n-s})_{1 \leq i,j \leq s}$ $(n-s \geq 1)$ are of the form 
\begin{align} \label{eq5.9}
c_{ij}^{({\bm r})}(t)_{n-s}=\bar{b}_{ij}^{({\bm r})}t^2+\bar{\lambda}_{ij}^{({\bm r})}t \hspace{3mm} (\bar{\lambda}_{ij}^{(\bm r)} \in \R).
\end{align}
Moreover, since the matrix
\begin{align*}
D_{\bm r}(t)_{n-s}
=
\scalebox{1}[0.99]
{$\left[
\begin{array}{cccc}
1 & 0 & \dots & 0 \\
0 & 0 & \dots & -(n-s)r_st \\
\vdots & \vdots & \rotatebox[origin=c]{295}{\vdots} & 0 \\
0 & -(n-s)r_st & 0 & 0 
\end{array}
\right]$}
\end{align*}
is equivalent to the matrix
{\footnotesize
\begin{align*}
\bar{D}_{\bm r}(t)_{n-s}
=
\begin{cases}
&\scalebox{1}[1]
{$\left[
\begin{array}{cccccc}
\multicolumn{1}{c|}{1} &&&&& \\ \cline{1-3}
& \multicolumn{1}{|c}{0} & \multicolumn{1}{c|}{-(n-s)r_st} &&& \\
& \multicolumn{1}{|c}{-(n-s)r_st} & \multicolumn{1}{c|}{0} &&& \\ \cline{2-3}
&&& \rotatebox[origin=c]{240}{\vdots} && \\ \cline{5-6}
&&&& \multicolumn{1}{|c}{0} & -(n-s)r_st \\
&&&& \multicolumn{1}{|c}{-(n-s)r_st} & 0 \\ 
\end{array}
\right]$} \\[0mm]
&\hfill \text{\normalsize ($n-s$ $:$ odd)} \\[2mm]
&\scalebox{0.88}[1]
{$\left[
\begin{array}{ccccccc}
\multicolumn{1}{c|}{1} &&&&&& \\ \cline{1-2}
& \multicolumn{1}{|c|}{-(n-s)r_st} &&&&& \\ \cline{2-4}
&& \multicolumn{1}{|c}{0} & \multicolumn{1}{c|}{-(n-s)r_st} &&& \\
&& \multicolumn{1}{|c}{-(n-s)r_st} & \multicolumn{1}{c|}{0} &&& \\ \cline{3-4}
&&&& \rotatebox[origin=c]{240}{\vdots} && \\ \cline{6-7}
&&&&& \multicolumn{1}{|c}{0} & -(n-s)r_st \\ 
&&&&& \multicolumn{1}{|c}{-(n-s)r_st} & 0 \\ 
\end{array}
\right]$} \\[2mm]
&\hspace{82mm} \text{\normalsize ($n-s$ $:$ even)}
\end{cases}
\end{align*}
}
\normalsize
over $\R$, we have
\begin{align} \label{eq5.10}
\sigma(D_{\bm r}(\xi)_{n-s})=\sigma(\bar{D}_{\bm r}(\xi)_{n-s})=
\begin{cases}
1 & \text{$n-s$ $:$ odd} \\
0 & \text{$n-s$ $:$ even, $r_s>0$} \\
2 & \text{$n-s$ $:$ even, $r_s<0$}
\end{cases}
\end{align}
for any real number $\xi>\alpha_{\bm r}$ $(\geq 0)$. Here, note that since $P_{\bm r}(0)=0$, we have $\alpha_{\bm r}\geq 0$.

Next, let $\varPhi_{\bm r}(t;x)$, $\varPsi_{\bm r}(t;x)$ be characteristic polynomials of $\bar{B}_{\bm r}(t)$, $C_{\bm r}(t)_{n-s}$, respectively. Then, by equations \eqref{eq5.54} and \eqref{eq5.9}, we have
\begin{align*}
\varPhi_{\bm r}(t ; x)&=x^s+h_{s-1}^{({\bm r})}t^2x^{s-1}+\cdots+h_{1}^{({\bm r})}t^{2s-2}x +h_{0}^{({\bm r})}t^{2s} \\ 
&\hspace{45mm}\left(h_{s-k}^{({\bm r})}=h_{s-k}(r_0, \cdots, r_s)\in \R \ (1 \leq k \leq s)\right), \\
\varPsi_{\bm r}(t ; x)&=x^s+\left(h_{s-1}^{({\bm r})}t^2+\psi_{s-1}(t)\right)x^{s-1}+\cdots \\
&\hspace{45mm}+\left(h_{1}^{({\bm r})}t^{2s-2}+\psi_{1}(t)\right)x+\left(h_{0}^{({\bm r})}t^{2s}+\psi_0(t)\right) \\
&\hspace{25mm} \left(\psi_0(t), \cdots, \psi_{s-1}(t) \in \R[t], \deg \psi_{s-k}(t) < 2k \ (1\leq k \leq s)\right).
\end{align*}
Here, let us divide the proof into next two cases. \\[2mm]
(i) The case $h_0^{({\bm r})}h_1^{({\bm r})} \cdots h_{s-1}^{({\bm r})} \neq 0$. \\[2mm]
In this case, we have
\begin{align*}
\varPsi_{\bm r}(t ; x)&=x^s+h_{s-1}^{({\bm r})}t^2\left( 1+\dfrac{\psi_{s-1}(t)}{h_{s-1}^{(\bm r)}t^2} \right)x^{s-1}+\cdots \\
&\hspace{30mm}+h_{1}^{({\bm r})}t^{2s-2}\left(1+\dfrac{\psi_{1}(t)}{h_{1}^{(\bm r)}t^{2s-2}}\right)x+h_{0}^{({\bm r})}t^{2s}\left(1+\dfrac{\psi_0(t)}{h_0^{(\bm r)}t^{2s}}\right)
\end{align*}
and $1+\psi_{s-k}(t)\bigl/h_{s-k}^{(\bm r)}t^{2k} \to 1$ $(t \to \infty)$ for any $k$ $(1 \leq k \leq s)$. Moreover, since $h_0^{({\bm r})}h_1^{({\bm r})} \cdots h_{s-1}^{({\bm r})} \neq 0$, we have $h_0^{({\bm r})}\neq 0$, which implies that for any non-zero real number $\xi$, $\varPhi_{\bm r}(\xi ; x)$ have $s$ non-zero real roots (counted with multiplicity).
Thus, there exists a real number $\rho_0$ $(>\alpha_{\bm r})$ such that for any real number $\xi>\rho_0$, $\varPsi_{\bm r}(\xi ; x)$ have the same number of positive (hence also negative) real roots with $\varPhi_{\bm r}(\xi ; x)$ by Proposition \ref{prop5.3}, which implies $\sigma(C_{\bm r}(\xi)_{n-s})=\sigma(\bar{B}_{\bm r}(\xi))$ and hence $\sigma(C_{\bm r}(\xi)_{n-s})=N_{g_{\bm r}}=\gamma$ $(\xi>\rho_0)$ by Lemma \ref{lem5.2}. 
Then, by the equation \eqref{eq5.10}, 
we have
\begin{align*}
\sigma(A_{\bm r}(\xi)_{n-s})=
\begin{cases}
\gamma+1 & \text{$n-s$ $:$ odd} \\
\gamma & \text{$n-s$ $:$ even, $r_s>0$} \\
\gamma+2 & \text{$n-s$ $:$ even, $r_s<0$}
\end{cases} 
\end{align*}
for any $\xi>\rho_0$, which implies
\begin{align*}
N_{f_{\bm r}(\xi ; x)}=\sigma(A_{\bm r}(\xi))=
\begin{cases}
\gamma+1 & \text{$n-s$ $:$ odd} \\
\gamma & \text{$n-s$ $:$ even, $r_s>0$} \\
\gamma+2 & \text{$n-s$ $:$ even, $r_s<0$}
\end{cases} 
\end{align*}
for any $\xi>\rho_0$ since $A_{\bm r}(\xi)$ and $A_{\bm r}(\xi)_{n-s}$ are equivalent over $\R$. Hence, by Lemma \ref{lem5.5}, we have 
\begin{align*}
N_{f_{\bm r}(\xi ; x)}=
\begin{cases}
\gamma+1 & \text{$n-s$ $:$ odd} \\
\gamma & \text{$n-s$ $:$ even, $r_s>0$} \\
\gamma+2 & \text{$n-s$ $:$ even, $r_s<0$}
\end{cases} 
\end{align*}
for any $\xi>\alpha_{\bm r}$. \\[2mm]
(ii) General case. \\[-3mm]

Let $\varepsilon_0$ be a positive real number and for any vector ${\bm v} \in \R^{s+1}$, set 
\begin{align*}
\alpha_{\bm v}^{\prime}=\max\{ |\alpha| \mid \alpha \in \C, P_{\bm v}(\alpha)=0 \}. 
\end{align*}
Clearly, we have $\alpha_{\bm v}^{\prime} \geq \alpha_{\bm v}$ for any ${\bm v} \in \R^{s+1}$. 
Here, let us put $\rho_0^{\prime}=\alpha_{\bm r}^{\prime}+\varepsilon_0$. Then, by Lemma \ref{lem5.5}, it is enough to prove the next claim. 

\begin{clm}
For any real number $\xi>\rho_0^{\prime}$, we have 
\begin{align*}
N_{f_{\bm r}(\xi ; x)}
=
\begin{cases}
\gamma+1 & \text{$n-s$ $:$ odd} \\
\gamma & \text{$n-s$ $:$ even, $r_s>0$} \\
\gamma+2 & \text{$n-s$ $:$ even, $r_s<0$}.
\end{cases}
\end{align*}
\end{clm}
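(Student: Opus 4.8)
My plan is to prove the Claim — and in fact Theorem~\ref{thm5.2} directly — by analysing the symmetric matrix pencil occupying the bottom-right block $C_{\bm r}(t)_{n-s}$ of the reduced Bezoutian, rather than by perturbing $\bm r$ into the Case~(i) locus. First I would reduce to that block. For every real $\xi$ the matrix $A_{\bm r}(\xi)$ is equivalent over $\R$ to the block-diagonal matrix $A_{\bm r}(\xi)_{n-s}=\mathrm{diag}\bigl(D_{\bm r}(\xi)_{n-s},\,C_{\bm r}(\xi)_{n-s}\bigr)$, so by Proposition~\ref{prop5.2},
\begin{align*}
N_{f_{\bm r}(\xi ; x)}=\sigma(A_{\bm r}(\xi))=\sigma(D_{\bm r}(\xi)_{n-s})+\sigma(C_{\bm r}(\xi)_{n-s}),
\end{align*}
and for $\xi>\alpha_{\bm r}\geq 0$ the first summand is $1$, $0$, $2$ in the three respective cases by \eqref{eq5.10}. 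Thus it remains only to show $\sigma(C_{\bm r}(\xi)_{n-s})=\gamma$ for all $\xi>\alpha_{\bm r}$, which covers $\xi>\rho_0'$ a fortiori.

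For the pencil, \eqref{eq5.9} gives $C_{\bm r}(t)_{n-s}=t\bigl(t\bar{B}_{\bm r}+\bar{\Lambda}_{\bm r}\bigr)$, where $\bar{\Lambda}_{\bm r}:=\bigl(\bar{\lambda}_{ij}^{(\bm r)}\bigr)_{1\leq i,j\leq s}\in\mathrm{Sym}_s(\R)$; since multiplying a symmetric matrix by a positive scalar preserves its index of inertia, $\sigma(C_{\bm r}(\xi)_{n-s})=\sigma(\xi\bar{B}_{\bm r}+\bar{\Lambda}_{\bm r})$ for every $\xi>0$. The decisive point is that $\bar{B}_{\bm r}$ is \emph{non-singular}: by Lemma~\ref{lem5.2} it is equivalent over $\R$ to the Bezoutian $B_{\bm r}=M_s(g_{\bm r})$, whose determinant is $\mathrm{led}(g_{\bm r})\,\D_{g_{\bm r}}=r_s\,\D_{g_{\bm r}}\neq 0$ since $g_{\bm r}$ is separable of degree $s$; Lemma~\ref{lem5.2} also gives $\sigma(\bar{B}_{\bm r})=N_{g_{\bm r}}=\gamma$ (take $\xi=1$ in its last assertion). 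Taking determinants in the block-diagonal form of $A_{\bm r}(t)_{n-s}$, and tracking the nonzero constant determinants of the congruences producing it, yields a factorisation $P_{\bm r}(t)=c_{\bm r}\,t^{n-1}\det\bigl(t\bar{B}_{\bm r}+\bar{\Lambda}_{\bm r}\bigr)$ with $c_{\bm r}\neq 0$; hence every \emph{real} zero of $t\mapsto\det\bigl(t\bar{B}_{\bm r}+\bar{\Lambda}_{\bm r}\bigr)$ is a real root of $P_{\bm r}$, so it is $\leq\alpha_{\bm r}$.

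Consequently the symmetric matrix $\xi\bar{B}_{\bm r}+\bar{\Lambda}_{\bm r}$ is invertible for every $\xi>\alpha_{\bm r}$, so none of its continuously varying eigenvalues changes sign on $(\alpha_{\bm r},\infty)$ and $\xi\mapsto\sigma(\xi\bar{B}_{\bm r}+\bar{\Lambda}_{\bm r})$ is constant there. To identify the constant I would let $\xi\to\infty$: $\sigma(\xi\bar{B}_{\bm r}+\bar{\Lambda}_{\bm r})=\sigma\bigl(\bar{B}_{\bm r}+\tfrac1\xi\bar{\Lambda}_{\bm r}\bigr)$, and since the eigenvalues of the non-singular matrix $\bar{B}_{\bm r}$ are bounded away from $0$ while $\tfrac1\xi\bar{\Lambda}_{\bm r}\to 0$, for $\xi$ large this equals $\sigma(\bar{B}_{\bm r})=\gamma$. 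Hence $\sigma(C_{\bm r}(\xi)_{n-s})=\gamma$ for all $\xi>\alpha_{\bm r}$, and with the first paragraph this gives $N_{f_{\bm r}(\xi ; x)}=\gamma+1,\ \gamma,\ \gamma+2$ in the three cases for every $\xi>\alpha_{\bm r}$; in particular the Claim follows (and along the way Lemma~\ref{lem5.5} turns out to be unnecessary).

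I expect the only genuine content to be the non-singularity of $\bar{B}_{\bm r}$ in the middle step: it is what makes the limit $\xi\to\infty$ legitimate, since no eigenvalue of $\bar{B}_{\bm r}$ degenerates to $0$ and therefore no degenerate-perturbation correction can enter $\sigma(\xi\bar{B}_{\bm r}+\bar{\Lambda}_{\bm r})$, and it is the single place where separability of $g_{\bm r}$ is used, through $\det B_{\bm r}\neq 0$ and Lemma~\ref{lem5.2}. If one prefers instead to stay within the framework set up just before the Claim, the alternative is: pick $\bm v$ close to $\bm r$ with $h_0^{(\bm v)}\cdots h_{s-1}^{(\bm v)}\neq 0$ — possible since by Lemma~\ref{lem5.3} these are nonzero polynomials — so that $\bm v$ falls under Case~(i) with the same $\gamma$ and the same $\sign(v_s)$; because $\det\bar{B}_{\bm r}\neq 0$ keeps the $t$-degree of $P_{\bm v}$ locally constant at $\bm r$, the roots of $P_{\bm v}$ stay within $\varepsilon_0$ of those of $P_{\bm r}$, so $\alpha_{\bm v}'<\rho_0'<\xi$ and the Case~(i) conclusion applies at our $\xi$; then pass back to $\bm r$ via Proposition~\ref{prop5.3}, using that $f_{\bm r}(\xi ; x)$ is separable because $\xi>\alpha_{\bm r}'$. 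In that version the obstacle is precisely this last chain of estimates, and it too hinges on $\bar{B}_{\bm r}$ being non-singular.
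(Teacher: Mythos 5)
Your argument is correct, and it takes a genuinely different route from the paper's. The paper proves the Claim by perturbing ${\bm r}$ inside $\R^{s+1}$: using Lemmas~\ref{lem5.3} and \ref{lem5.4} it finds a nearby ${\bm w}$ for which all coefficients $h_{s-k}^{({\bm w})}$ of the characteristic polynomial of $\bar{B}_{\bm w}$ are nonzero, runs the case-(i) asymptotic comparison of $\varPsi_{\bm w}$ with $\varPhi_{\bm w}$ there, and transfers the root count back to ${\bm r}$ through the three continuity statements (S1)--(S3) built on Proposition~\ref{prop5.3}, finally invoking Lemma~\ref{lem5.5} to descend from $\rho_0^{\prime}$ to $\alpha_{\bm r}$. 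You instead stay at ${\bm r}$ and work directly with the pencil $C_{\bm r}(t)_{n-s}=t\bigl(t\bar{B}_{\bm r}+\bar{\Lambda}_{\bm r}\bigr)$: separability of $g_{\bm r}$ gives $\det B_{\bm r}=r_s\,\D_{g_{\bm r}}\neq 0$, hence $\det\bar{B}_{\bm r}\neq 0$ by the congruence in Lemma~\ref{lem5.2}; the factorization $P_{\bm r}(t)=c_{\bm r}\,t^{n-1}\det\bigl(t\bar{B}_{\bm r}+\bar{\Lambda}_{\bm r}\bigr)$ is legitimate because the matrices $S(t)_1$ and $S_{\bm r}(t)_k$ effecting the reduction have constant nonzero determinant, and it forces the pencil to be invertible on $(\alpha_{\bm r},\infty)$, so its signature is constant there and can be evaluated in the limit $\xi\to\infty$, where non-degeneracy of $\bar{B}_{\bm r}$ gives $\sigma(\bar{B}_{\bm r})=\gamma$ via Lemma~\ref{lem5.2}. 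This buys a real simplification: the conclusion is obtained for all $\xi>\alpha_{\bm r}$ in one stroke, so the case split (i)/(ii), Lemmas~\ref{lem5.3}, \ref{lem5.4}, \ref{lem5.5}, and the $\varepsilon$--$\delta$ bookkeeping all become unnecessary, and the only hypotheses used are $r_s\neq 0$ and $\D_{g_{\bm r}}\neq 0$. What the paper's longer route provides in exchange is the explicit information that the coefficients $h_{s-k}$ are generically nonzero, which your proof shows is not actually needed for the theorem. The one thing worth stating explicitly in your write-up is that $\sigma$ is the signature, so that it is additive over the block decomposition, invariant under congruence and positive scaling, and locally constant on families of invertible symmetric matrices --- the three facts your argument leans on.
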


\begin{proof}
By the assumption that $g_{\bm r}(x)$ is a separable polynomial of degree $s$ and the fact that the non-real roots must occur in pair with its complex conjugate, there exists a real number $\delta_0$ such that for any vector ${\bm v}=(v_0, \cdots, v_s) \in \R^{s+1}$ satisfying 
$|{\bm r}-{\bm v}|_0=\max_{0 \leq k \leq s}\{ |r_k-v_k| \}<\delta_0$, 
$g_{\bm v}(x)$ is also a degree $s$ separable polynomial satisfying  $N_{g_{\bm v}}=N_{g_{\bm r}}=\gamma$ by Proposition \ref{prop5.3}. 
\begin{align*}
\text{(S1)} \hspace{1.5mm} &\text{If a vector ${\bm v} \in \R^{s+1}$ satisfies $|{\bm r}-{\bm v}|_0<\delta_0$, then $g_{\bm v}(x)$ is also a degree $s$} \\ 
&\text{separable polynomial satisfying $N_{g_{\bm v}}=N_{g_{\bm r}}=\gamma$. }
\end{align*}
Next, we put
\begin{align*}
P(t)=\sum_{k \geq 0} x_k(t_0, \cdots, t_s)t^k=\det A(t) \ (A(t)=A(t_0, \cdots, t_s, t)) 
\end{align*}
and let us consider $P(t)$ as a polynomial over $E_1=\R(t_0, \cdots, t_s)$ in $t$. Then, since $x_k(t_0, \cdots, t_s) \in \R[t_0, \cdots, t_s]$ for any $k \geq 0$, there exists a real number $\delta_1>0$ such that for any vector ${\bm v} \in \R^{s+1}$ satisfying $|{\bm r}-{\bm v}|_0<\delta_1$, we have $|\alpha_{\bm r}^{\prime}-\alpha_{\bm v}^{\prime}|<\varepsilon_0$ by Proposition \ref{prop5.3};
\begin{align*}
\text{(S2) If a vector ${\bm v} \in \R^{s+1}$ satisfies $|{\bm r}-{\bm v}|_0<\delta_1$, we have $|\alpha_{\bm r}^{\prime}-\alpha_{\bm v}^{\prime}|<\varepsilon_0$.}
\end{align*}
Here, let $\xi$ be any real number such that $\xi>\rho_0^{\prime}=\alpha_{\bm r}^{\prime}+\varepsilon_0$ and let 
\begin{align*}
\Omega(t_0, \cdots, t_s, \xi ; x)=\sum_{k=0}^n y_k(t_0, \cdots, t_s)x^k \in E_1[x]
\end{align*}
be the characteristic polynomial of the Bezoutian 
\begin{align*}
A(t_0, \cdots, t_s, \xi ; x)=M_n(f^{(n)}(t_0, \cdots, t_s, \xi ; x), f^{(n)}(t_0, \cdots, t_s, \xi ; x)^{\prime}).
\end{align*}
Here, $f^{(n)}(t_0, \cdots, t_s, \xi ; x)^{\prime}$ is the derivative of
\begin{align*}
f^{(n)}(t_0, \cdots, t_s, \xi ; x)=\sum_{k=0}^n z_k(t_0, \cdots, t_s)x^k \in E_1[x]
\end{align*}
with respect to $x$. Then, since $z_k(t_0, \cdots, t_s) \in \R[t_0, \cdots, t_s]$ $(0 \leq k \leq n)$, we also have $y_k(t_0, \cdots, t_s) \in \R[t_0, \cdots, t_s]$ $(0 \leq k \leq n)$. Moreover, since $\xi>\rho_0^{\prime}>\alpha_{\bm r}$, we have $\det A_{\bm r}(\xi)=\det A(r_0, \cdots, r_s, \xi)\neq 0$. 

By these arguments, we can also deduce that there exists a positive real number $\delta_2$ such that for any vector ${\bm v} \in \R^{s+1}$ satisfying $|{\bm r}-{\bm v}|_0<\delta_2$, the characteristic polynomial $\Omega_{\bm v}(\xi ; x)$ have the same number of positive and hence negative real roots with $\Omega_{\bm r}(\xi ; x)$ (counted with multiplicity), which implies
$N_{f_{\bm r}(\xi ; x)}=\sigma(A_{\bm r}(\xi))=\sigma(A_{\bm v}(\xi))=N_{f_{\bm v}(\xi ; x)}$.
\begin{align*}
\text{(S3) If a vector ${\bm v} \in \R^{s+1}$ satisfies $|{\bm r}-{\bm v}|_0<\delta_2$, we have $N_{f_{\bm r}(\xi ; x)}=N_{f_{\bm v}(\xi ; x)}$.}
\end{align*}
Put $\delta=\min\{ \delta_0, \delta_1, \delta_2 \}>0$. Then, there exists a vector ${\bm w}=(w_0, \cdots, w_s) \in \R^{s+1}$ such that 
\begin{center}
$(a)$ $|{\bm r}-{\bm w}|_0<\delta$, $(b)$ $h_0^{({\bm w})}h_1^{({\bm w})}\cdots h_{s-1}^{({\bm w})}\neq 0$. 
\end{center}
Here, we put $h_{s-k}^{({\bm w})}=h_{s-k}(w_0, \cdots, w_s)$ for any $k$ $(1 \leq k \leq s)$. 
In fact, since $h_{s-k}(t_0, \cdots, t_s)$ is a non-zero polynomial for any $k$ $(1 \leq k \leq s)$ by Lemma \ref{lem5.3}, the product $\prod_{k=1}^s h_{s-k}(t_0, \cdots, t_s)$ is also non-zero, which implies that there exists a vector ${\bm w} \in \R^{s+1}$ satisfying $(a)$ and $(b)$.

Let ${\bm w} \in \R^{s+1}$ be the vector as above. Then, since $|{\bm r}-{\bm w}|_0<\delta\leq \delta_0$, $g_{\bm w}(x)$ is a degree $s$ separable polynomial satisfying $N_{g_{\bm w}}=\gamma$ by (S1) and also, by (S2), we have $\alpha_{\bm w}\leq \alpha_{\bm w}^{\prime}<\alpha_{\bm r}^{\prime}+\varepsilon_0=\rho_0^{\prime}<\xi$. Thus, by $(b)$ and the case (i), we have 
\begin{align*}
N_{f_{\bm w}(\xi ; x)}
=
\begin{cases}
\gamma+1 & \text{$n-s$ $:$ odd} \\
\gamma & \text{$n-s$ $:$ even, $r_s>0$} \\
\gamma+2 & \text{$n-s$ $:$ even, $r_s<0$},
\end{cases}
\end{align*}
which, by (S3), implies
\begin{align*}
N_{f_{\bm r}(\xi ; x)}
=
\begin{cases}
\gamma+1 & \text{$n-s$ $:$ odd} \\
\gamma & \text{$n-s$ $:$ even, $r_s>0$} \\
\gamma+2 & \text{$n-s$ $:$ even, $r_s<0$}.
\end{cases}
\end{align*}
Since $\xi$ is any real number such that $\xi>\rho_0^{\prime}$, this completes the proof of Claim and hence the proof of Theorem \ref{thm5.2}.
\end{proof}


In this section we want to investigate the irreducibility and the Galois group of the family of polynomials $f(t, x)$ over $\Q (t)$.  Galois groups over the transcendental extension $\Q(t)$ are of interest due to the Hilbert irreducibility theorem.  Computing Galois groups of polynomials over $\Q (t)$ we refer to \cite{smith} and \cite{mckay}.  

\begin{prop} 
Let $g(x)=\sum_{i=0}^s a_i x^i$ be a polynomial in $\R[x]$ such that $\D_g \neq 0$ and
\begin{equation} \label{eq-f}
f(t, x) = x^n + t \cdot g(x) 
\end{equation}
If $g(x)$ is totally complex, $(n-s)$ is even, and $a_s > 0 $ then $f(\beta, x)$ is totally complex for all $\beta > \max \{ \alpha \, | \, \D_{f, x} (\alpha) =0 \}$.
%
%
%
\end{prop}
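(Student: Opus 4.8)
The plan is to obtain this statement as the extreme case $\gamma = 0$ of Theorem~\ref{thm5.2}. First I would record that the hypotheses $\D_g \neq 0$ and $a_s > 0$ force $g(x)$ to be a separable polynomial of degree exactly $s$, and that a totally complex polynomial has no real roots; so, writing ${\bm r} = (a_0, \dots, a_s)$, we are in the situation of Theorem~\ref{thm5.2} with $g_{\bm r} = g$ and $\gamma = N_{g_{\bm r}} = 0$. Thus both standing hypotheses of that theorem (degree $s$, separable, with $\gamma$ real roots) are satisfied, and we may invoke it.

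Next I would identify the polynomial $P_{\bm r}(t)$ of Theorem~\ref{thm5.2} with the discriminant $\D_{f,x}(t)$. Viewing $f(t,x) = x^n + t\,g(x)$ as a polynomial in $x$ over $\R(t)$, its leading coefficient is $1$, so by the Remark relating the determinant of the Bezoutian to the discriminant, $\D_{f,x}(t) = \frac{1}{\mbox{led}(f)}\det M_n(f(t;x)) = \det M_n(f_{\bm r}(t;x)) = P_{\bm r}(t)$. Consequently $\alpha_{\bm r} = \max\{\alpha \in \R \mid P_{\bm r}(\alpha) = 0\}$ coincides with $\max\{\alpha \in \R \mid \D_{f,x}(\alpha) = 0\}$, so the hypothesis $\beta > \alpha_{\bm r}$ of Theorem~\ref{thm5.2} is precisely the condition imposed on $\beta$ in the Proposition.

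Now I would apply Theorem~\ref{thm5.2} in the case ``$n-s$ even, $r_s = a_s > 0$'', which gives $N_{f(\beta;x)} = \gamma = 0$ for every $\beta > \max\{\alpha \mid \D_{f,x}(\alpha) = 0\}$; that is, $f(\beta,x)$ has no distinct real roots. Finally, because such a $\beta$ lies strictly above every real zero of $\D_{f,x}$ we have $\D_{f,x}(\beta) \neq 0$, so $f(\beta,x)$ is separable. Since $s$ is even (any totally complex polynomial has even degree) and $n-s$ is even by hypothesis, $n$ is even; hence $f(\beta,x)$ is a separable degree $n$ real polynomial with no real roots, i.e.\ it factors over $\R$ as a product of $n/2$ irreducible real quadratics and is totally complex, as claimed.

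This argument is essentially a direct specialization of Theorem~\ref{thm5.2}, so I do not expect a genuine obstacle. The only points that need care are the identification $P_{\bm r} = \D_{f,x}$ (which relies on $f$ being monic in $x$, so that $\mbox{led}(f) = 1$) and the observation that ``no distinct real roots'' upgrades to ``totally complex'' exactly because $\beta$ avoids the discriminant locus and the degree $n$ is even; both are routine once the setup above is in place.
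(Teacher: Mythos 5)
Your proof is correct and follows essentially the same route as the paper: specialize Theorem~\ref{thm5.2} to $\gamma=0$ in the case ``$n-s$ even, $a_s>0$'' to conclude $N_{f(\beta;x)}=0$ for all $\beta$ beyond the largest real root of the discriminant. You are in fact somewhat more careful than the paper's own (very terse) proof, in that you justify the identification $P_{\bm r}=\D_{f,x}$ via monicity and explain why ``no distinct real roots'' upgrades to ``totally complex'' using separability of $f(\beta,x)$ off the discriminant locus.
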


\proof
To prove part ii) we have to show that $f(\beta, x)$ has no real roots for $\beta > \max \{ \alpha \, | \, \D_{f, x} (\alpha) =0 \}$. 
Since $g(x)$ is totally complex we have that $\gamma=0$. 
Form  Thm.~\ref{thm5.2} we have that   $N_{f, \beta} = \gamma$ or $\gamma+2$. The fact that $\beta > \max \{ \alpha \, | \, \D_{f, x} (\alpha) =0 \}$ implies that $N_{f, \beta} = \gamma=0$. Hence, $f(\beta , x)$ is totally complex. 
%
\qed




\begin{rem}
Polynomials in Eq.~\eqref{eq-f} for $s=1$ and $t=1$ has been treated in \cite{zarhin} while studying Mori trinomials.  It is shown there that the Galois group of $f(x)$  over $\Q$  is isomorphic to $S_n$; see \cite[Cor.~3.5]{zarhin} for details. 
\end{rem}

In general, if we let $K:=\Q (t, a_0, \dots , a_s)$ be the field of transcendental degree $s+1$, for $1 \leq s < n$,  then it is expected that $\Gal_K (f) \iso S_n$. 
  We have checked with Maple all values $3 \leq n\leq 8$ and $1\leq s< n$ and $\Gal_{\Q(t)} (f) \iso S_n$ in all cases. This would generalize Zarhin's result for a more general class of polynomials.

%
%
%
%
%
%




\bibliographystyle{amsalpha} 

\bibliography{ref}{}

\end{document}